\documentclass[12pt]{amsart}
\textwidth=160mm
\oddsidemargin -.2cm
\evensidemargin -.2cm


\usepackage{amssymb}
\newcommand{\C}{\mathbb{C}}
\newcommand{\R}{\mathbb{R}}


\def\be{\begin{equation}}
\def\ee{\end{equation}}
\def\bequnan{\begin{eqnarray*}}
\def\eequnan{\end{eqnarray*}}
\def\la{\label}
\def\diff{\mathrm{d}}
\def\von{\varepsilon}

\def\P1{{\mathbf P}}

\def\cH{{\sf H}}
\def\cG{{\sf G}}

\def\wt{\widetilde}

\def\({\left(}
\def\Ran{\operatorname{Ran}}
\def\){\right)}
\def\s2{ \wt{S_2} }

\def\llangle{\left\langle}
\def\rrangle{\right\rangle}

\def\len{\left\|}
\def\rin{\right\|}
\def\yOonequ{{\stackrel{\lower.5mm\hbox{$ \scriptstyle\circ $}}y}_1^{\lower3pt\hbox{$ \scriptstyle 2 $}}}
\def\yOtwoqu{{\stackrel{\lower.5mm\hbox{$ \scriptstyle\circ $}}y}_2^{\lower3pt\hbox{$ \scriptstyle 2 $}}}
\def\yOonequti{\stackrel{\lower1mm\hbox{$ \scriptstyle\circ $}}{\widetilde y}_1^{\lower4pt\hbox{$ \scriptstyle 2 $}}}
\def\yOtwoquti{\stackrel{\lower1mm\hbox{$ \scriptstyle\circ $}}{ \widetilde y  }_2^{\lower4pt\hbox{$ \scriptstyle 2 $}} }
\def\yOone{{\stackrel{\lower.5mm\hbox{$ \scriptstyle\circ $}}y}_1}
\def\yOtwo{{\stackrel{\lower.5mm\hbox{$ \scriptstyle\circ $}}y}_2}
\def\yOoneti{\stackrel{\lower1mm\hbox{$ \scriptstyle\circ $}}{\widetilde y}_1}
\def\yOtwoti{\stackrel{\lower1mm\hbox{$ \scriptstyle\circ $}}{ \widetilde y  }_2 }

\begin{document}

\newcounter{prop}
\newtheorem{theorem}{Theorem}
\newtheorem{proposition}[prop]{Proposition}
\newtheorem{corollary}[prop]{Corollary}
\newtheorem{lemma}[prop]{Lemma}
\newcounter{rem}
\newtheorem{remark}[rem]{Remark}
\newtheorem*{remarknonumb}{Remark}
\newcounter{exerc}
\newtheorem{exercise}[exerc]{Exercise}
\def\theremark{\unskip}
\newtheorem{deffinition}{Definition}
\newtheorem{definition}{Definition}

\def\thedefinition{\unskip}


\begin{center}{\large ORDER PROBLEM FOR CANONICAL SYSTEMS AND \\ 
\vskip3mm
A CONJECTURE OF VALENT}
\end{center}
\bigskip \bigskip

\hskip2,5cm\vbox{\hsize10,5cm\baselineskip4mm
\noindent{\small\textbf{Abstract.} We establish a sharp upper estimate for the order of a canonical system in terms of the Hamiltonian. This upper estimate becomes an equality in the case of Krein strings. As an application we prove a conjecture of Valent about the order of a certain class of Jacobi matrices with polynomial coefficients.}

\noindent\textbf{Keywords:} canonical systems, spectral asymptotics, Jacobi matrices, strings.}

\footnotetext{AMS subject classifications: 34L15, 47B36.}

\bigskip

\centerline{\sf R.Romanov}

\medskip

\begin{center}
Department of Mathematical Physics and Laboratory of Quantum Networks, \\
Faculty of Physics, St Petersburg State University, \\
198504, St Petersburg, Russia,\\
e-mail: morovom@gmail.com
\end{center}

\bigskip \bigskip

\textbf{Introduction.} Let $ L $ be a positive number and $ \cH $  be a summable function on $ [ 0 , L ] $ with values in $ 2\times 2 $ matrices, such that $ \cH ( x ) \ge 0 $ a. e. Let  $ J= \left( \begin{array}{cc} 0 & -1 \cr 1 & 0 \end{array} \right) $. \textit{A canonical system} $ ( \cH , L ) $ is the matrix differential equation of the form
\be\la{can} J \frac{d Y}{d x } = z \cH Y ; \; z \in \C , \; x \in [ 0 , L ] . \ee
A solution $ M ( x , z ) $ of this equation satisfying $ M ( 0 , z ) = I $ is called the monodromy matrix.  We write $ M ( z ) = M ( L , z ) $. The function $ \cH $ is referred to as Hamiltonian. Without loss of generality we assume that $ \operatorname{tr} \cH ( x ) = 1 $ a. e. The background on canonical systems can be found in \cite{deBr, Sachnovich}.

Given a canonical system, the quantities 
\[ \limsup_{ |z| \to \infty } \frac{  \log | M_{ ij } (  z ) | }{ |z| }  \] and  
\[ \limsup_{ |z| \to \infty } \frac{  \log \log | M_{ ij } ( z ) | }{ \log |z| }  \]
do not depend on $ i , j $ (see e.g. \cite{BW}), and are called type and order of the system, respectively. The type is given by the classical Krein -- de Branges formula \cite{deBr},
\be\la{KdeB} \mbox{type of } ( \cH , L ) = \int_0^L \sqrt{ \det \cH ( t ) } \diff t . \ee  

In particular, this formula says that if $ \det \cH ( x ) = 0 $ a. e. then the matrix elements of $ M ( z ) $ have minimal type, and a fundamental question is to find or estimate the order of the system. This question is the order problem referred to in the title. One should notice here that the operators corresponding to canonical systems typically are not semibounded below hence conventional variational principles are not suitable for estimating their eigenvalues.

In the present paper we establish an upper estimate for the order in terms of the Hamiltonian which is sharp in the power scale and gives the actual value of the order in all available examples where it is known. Let us formulate the result.
 
\begin{definition}
A Hamitonian $ \cH $ is of finite rank if there exist numbers $ x_j $, $ 0 = x_0 < x_1 < \dots < x_N = L $, and a finite set of vectors, $ \{ e_j \}_{ j = 0 }^{ N-1 } $, $ e_j \in \R^2 $, of unit norm such that 
\[ \cH ( x )  = \llangle \cdot , e_j \rrangle_{ \C^2 }  e_j , \; x \in ( x_ j , x_{ j+1 } ) , \;  j = 0, \dots , N - 1 . \]
The (elements of) sets $ \{ x_j \} $,  $ \{ e_j \} $ and the number $ N $ are called parameters and the rank of the Hamiltonian,\footnote{Notice that we do not require $ e_j \ne e_{ j+1 } $, hence the rank of a finite rank Hamiltonian is not defined uniquely.} respectively.
\end{definition}
 
\begin{theorem} Let $ ( \cH , L ) $ be a canonical system and let $ 0 < d < 1 $.

1. Suppose that there exists a $ C > 0 $ such that for each $ R $ large enough there exists a Hamiltonian $ \cH_R $ of a finite rank, $ N(R) $, defined on $ (0 , L ) $ and a set of numbers (depending on $R$) $ \{ a_j \}_0^{ N(R) -1 } $, $ 0 < a_j \le 1 $, for which the following conditions are satisfied ($ P_j = \langle \cdot , e_j \rangle e_j $; $ x_j $, $ e_j $ are the parameters of $ \cH_R $),

(i) \[ \sum \frac 1{a_j^2} \int_{x_j }^{ x_{ j+1 }} \len \cH ( t ) - \cH_R ( t) \rin \diff t \le C R^{ d-1 } , \]

(ii) \[ \sum a_j^2 ( x_{ j+1 } - x_j ) \le C R^{ d-1 } , \]

(iii) \[ \sum \log \( 1 + \frac{ \len P_j - P_{ j+1 }  \rin }{ a_j a_{ j+1 } } \) \le C R^d , \]

(iv)
\[ \log a_0^{ -1 } +  \log a_{ N(R) -1 }^{ -1 }+ \sum \left| \log\frac{ a_j }{ a_{ j-1} } \right|\le C R^d . \]

Then there exists a $ K > 0 $ such that
\[ \len M ( z ) \rin \le e^{ K \left| z \right|^d } \]
for all $ z \in \C $. 

2. For each $ p $, $ 0 < p< 1 $, there exists a system $ ( \cH , L ) $ of order $ p $ which for any $ \von > 0 $ satisfies the assumption of assertion 1 with $ d = p + \von $.   
\end{theorem}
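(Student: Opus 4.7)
Fixing $R$ and assuming $|z|\le R$, I will show $\len M(z)\rin\le e^{KR^d}$, which upon setting $R=|z|$ gives the theorem. The strategy is to compare $M(z)$ with the monodromy $M_R(z)$ of the finite rank approximant $\cH_R$, for which an explicit product formula is available, and to control the difference by a Volterra/Duhamel argument. On each interval $(x_j,x_{j+1})$ one has $\cH_R=P_j$, so $JP_j$ is nilpotent of square zero and the local transfer matrix is exactly $T_j(z)=I-z(x_{j+1}-x_j)JP_j$. The naive product of the $T_j$'s interlaced with basis rotations yields only an exponential-type bound $e^{C|z|}$. The key device is to insert the auxiliary scaling $D_j=\operatorname{diag}(a_j,a_j^{-1})$ written in the orthonormal frame $(e_j,Je_j)$: a direct computation gives
\[
D_j^{-1}T_j(z)D_j=\begin{pmatrix}1&0\\-za_j^2(x_{j+1}-x_j)&1\end{pmatrix},
\]
whereas the connector $D_{j+1}^{-1}Q_jD_j$, with $Q_j$ the rotation from $(e_j,Je_j)$ to $(e_{j+1},Je_{j+1})$, has norm at most a constant times $\max(a_j/a_{j+1},a_{j+1}/a_j)\bigl(1+\len P_j-P_{j+1}\rin/(a_ja_{j+1})\bigr)$, as one sees by a $2\times2$ matrix computation using the expression $\len P_j-P_{j+1}\rin=|\sin\theta_j|$ for the angle $\theta_j$ between $e_j$ and $e_{j+1}$.

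Writing $M_R(z)$ as the alternating product of these blocks and taking logarithms, the three types of contribution are summed by (ii) (through $\log(1+|z|a_j^2(x_{j+1}-x_j))\le|z|a_j^2(x_{j+1}-x_j)$), by (iii), and by (iv); the boundary factors $D_0^{-1}$ and $D_{N(R)-1}$ are absorbed into the first two terms of (iv). Thus $\log\len M_R(z)\rin\le CR^d$ uniformly for $|z|\le R$. For the passage from $M_R$ to $M$ I write the Volterra equation for the cocycle $W(x,z):=M(x,z)M_R(x,z)^{-1}$ and estimate it in the $D_j$-rescaled frame on each interval; since $\|D_j\|^2=a_j^{-2}$, the integrand picks up a factor $a_j^{-2}\len\cH(t)-\cH_R(t)\rin$, so that condition (i) delivers a contribution $|z|\cdot CR^{d-1}\le CR^d$, and Gronwall closes the estimate. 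The main technical obstacle is matching the piecewise scalings $D_j$ consistently across jump points between the two estimates, so that no extra factor in $\{a_j\}$ is produced at the interfaces.

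\textbf{Plan for Assertion 2.}
For the existence of examples I invoke the Krein correspondence between diagonal canonical systems and strings: for each $p\in(0,1)$ an explicit power-law mass density produces a string, and thus a canonical system, of order exactly $p$ (this is classical Kac--Krein theory). Given such an $\cH$, verifying the hypothesis of assertion 1 at level $d=p+\von$ amounts to choosing a partition $\{x_j\}$ and weights $\{a_j\}$ of power-law type, both tuned to the local behaviour of $\cH$ and to $N(R)$, so that all four conditions (i)--(iv) are simultaneously saturated at the rate $R^d$. This calibration is the delicate point: $d$ enters (i)--(ii) as $R^{d-1}$ and (iii)--(iv) as $R^d$, so the $a_j$ must balance opposite requirements, and the slack $\von>0$ is precisely what is needed to absorb logarithmic discrepancies that otherwise would prevent the sharp case $d=p$.
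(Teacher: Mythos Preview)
Your scheme for Part 1 contains the essential ingredients---the conjugation by $\Omega_j=\operatorname{diag}(a_j^{-1},a_j)U_j$ and the role of (ii)--(iv) in controlling the block-product transitions is exactly what the paper does. The difference is architectural: you split the task into ``bound $M_R$'' plus ``Duhamel from $M_R$ to $M$'', whereas the paper never separates the two. It simply applies Gronwall to $\Omega_j M(\cdot,z)$ on $[x_j,x_{j+1}]$ and splits $\cH=( \cH-P_j)+P_j$ \emph{additively inside the integrand} $\|\Omega_jJ\cH(t)\Omega_j^{-1}\|$. The two pieces contribute $a_j^{-2}\|\cH-P_j\|$ and $a_j^2$ respectively, so after summing logs one reads off (i) and (ii) directly, and (iii)--(iv) handle the connector norms $\|\Omega_{j+1}\Omega_j^{-1}\|$. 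No approximant monodromy, no Duhamel.

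Your two-step route has a genuine gap at the Duhamel stage. The correct Volterra equation is for $W=M_R^{-1}M$ (not $MM_R^{-1}$, which does not satisfy a linear ODE of the required shape), and it reads $W'=-zM_R^{-1}J(\cH-\cH_R)M_R\,W$. In the $\Omega_j$-frame on $[x_j,x_{j+1}]$ the integrand is therefore not $a_j^{-2}\|\cH-\cH_R\|$ but rather
\[
\|\Omega_j\Psi_j(t)^{-1}\Omega_j^{-1}\|\,\|\Omega_j\Psi_j(t)\Omega_j^{-1}\|\cdot a_j^{-2}\|\cH(t)-P_j\|,
\]
where $\Psi_j(t)=I-z(t-x_j)JP_j$ is the local transfer matrix of $\cH_R$. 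The first factor is $(1+|z|a_j^2(t-x_j))^2$, and (ii) only bounds the \emph{sum} $\sum|z|a_j^2(x_{j+1}-x_j)$ by $CR^d$, so a single interval may carry $|z|a_j^2(x_{j+1}-x_j)\sim R^d$, blowing up your claimed bound. This is fixable (subdivide any interval with $Ra_j^2(x_{j+1}-x_j)>1$ into $O(Ra_j^2(x_{j+1}-x_j))$ pieces; (i)--(iv) are unaffected and one gains $\|\Omega_j\Psi_j\Omega_j^{-1}\|=O(1)$), but you have not said so, and in any case the paper's one-step Gronwall avoids the issue entirely: there, (i) and (ii) enter \emph{additively} in the exponent rather than multiplicatively.

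For Part 2 your plan is too vague. The paper does not appeal to abstract string theory: it writes down an explicit diagonal Hamiltonian alternating $\frak H_1,\frak H_2$ on intervals $[b_{j-1},b_j]$ with $b_j=1-j^{-\alpha}$, $\alpha=p^{-1}-1$, verifies (i)--(iv) by hand with $N(R)\sim R^p$ and a two-value choice of $a_j$, and then proves the lower bound on the order directly from the monodromy product, using that the polynomials $M_{11}(b_{2j-1},\cdot)$ have only real zeros so $|M_{11}(b_{2j-1},i\tau)|$ dominates its leading term. You would need to supply a comparable explicit computation.
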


Theorem 1 gives an upper bound for the order in terms of the quality of approximation of the Hamiltonian by piecewise constants. The choice of approximators is natural in the sense that finite rank Hamiltonians have order zero (the corresponding monodromy matrices are polynomials), see also Section \ref{comont1}. Piecewise constant (or, more generally, polynomial) approximations are the mainstream in studying spectral asymptotics for integral and differential operators, see for instance \cite{BirmanS,BirmanS2}. By way of comparison, notice that those studies are mainly aimed at controlling the number of "pieces" necessary for approximation of a given function with a given accuracy, while in Theorem 1 the number $ N (R) $ does not play a direct role. In special situations, however, an optimal choice of approximation leads to conditions explicitly involving the number $ N (  R ) $ (see assumption (B) in the following theorem).   

An important class of canonical systems is constituted by systems with diagonal Hamiltonians $ \cH $. Such systems arise in description of mechanical strings with variable density sometimes called Krein strings, see \cite{KWW} for details. In the context of the order problem ($ \operatorname{rank} \cH ( x ) = 1 $) a diagonal Hamiltonian may take only two values, $ \frak H_ 1 = \begin{pmatrix} 1 & 0 \cr 0 & 0 \end{pmatrix} $ and $ \frak H_2 = \begin{pmatrix} 0 & 0 \cr 0 & 1 \end{pmatrix} $. Our next result says that in this case the upper bound implied by Theorem 1 coincides with the actual order. The formulation is as follows. Define $ X_1 = \{ x \in ( 0 , L ) \colon \cH ( x ) = \frak H_1 \} $, $ X_2 = \{ x \in ( 0 , L ) \colon \cH ( x ) = \frak H_2 \} $.  Let $ | \cdot | $ stand for the Lebesgue measure. 

\begin{theorem}\la{selfsim}
Suppose that for a. e. $ x\in [ 0 , L ] $ either $ \cH ( x ) = \frak H_ 1 $, or $ \cH ( x ) = \frak H_2 $. Then the order of the system $ ( \cH, L ) $ coincides with the infimum of $ d $'s , $ 0< d < 1 $, for which there exists a positive $ C = C ( d ) $ such that for each $ R $ large enough there exist a covering of the interval $ ( 0 , L ) $ by $ n = n ( R ) $ intervals, $ \omega_j $, such that

\textit{(A)} \[ \sum \sqrt{ | \omega_j \cap X_1 | \, | \omega_j \cap X_2 | } \le C R^{ d-1 }  ; \] 

\textit{(B)} \[ n( R ) \le C R^d . \]
\end{theorem}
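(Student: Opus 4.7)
The theorem states two inequalities. The upper bound ``order $\le \inf d$'' is an application of Theorem~1; the converse requires a direct argument using the string structure. Throughout, set $\alpha_j := |\omega_j \cap X_1|$ and $\beta_j := |\omega_j \cap X_2|$, so $|\omega_j|=\alpha_j+\beta_j$.

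\emph{Upper bound.} Given, for each $R$, a covering $\{\omega_j\}_{j=1}^{n(R)}$ satisfying (A) and (B), I would build the approximator in Theorem~1 by subdividing each $\omega_j$ into two adjacent subintervals of lengths $\alpha_j$ and $\beta_j$ and setting $\cH_R = \frak H_1$ on the first, $\cH_R = \frak H_2$ on the second, with the two subintervals ordered so as to minimise the projection jump at the $\omega_j$--$\omega_{j+1}$ interface. Then $\cH_R$ has at most $2n(R) \le 2CR^d$ pieces, and the symmetric difference of the two $\frak H_1$-supports inside $\omega_j$ has measure $\le 2\min(\alpha_j,\beta_j)$, giving $\int_{\omega_j}\len\cH-\cH_R\rin\diff t \le 2\min(\alpha_j,\beta_j)$. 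Assign to both subintervals of $\omega_j$ the common weight $a_j := \max\bigl((\min(\alpha_j,\beta_j)/|\omega_j|)^{1/4},\, R^{-K}\bigr)$ for a large fixed $K$. Using $\min(\alpha,\beta)(\alpha+\beta)\le 2\alpha\beta$, conditions (i) and (ii) of Theorem~1 both reduce to $\lesssim \sum_j \sqrt{\alpha_j\beta_j} + LR^{-2K} = O(R^{d-1})$ by (A). The truncation ensures $a_j^{-1}\le R^K$, so each summand in (iii) and (iv) is $O(\log R)$; with $O(n(R))=O(R^d)$ summands, the totals are $O(R^d\log R) \le C'R^{d+\von}$. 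Theorem~1 then yields order $\le d + \von$, and letting $\von \to 0$ and infimising over admissible $d$ finishes this direction.

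\emph{Lower bound.} The converse demands, for every $d > p$ with $p$ the order of the system, a covering satisfying (A) and (B) for all large $R$. Here the two-valued diagonal structure of $\cH$ must be fully exploited: on any interval of constancy of length $t$ the transfer matrix is unipotent, so $M(z)$ is an explicit product of alternating lower- and upper-triangular factors along the block decomposition of $\cH$. I would construct the covering greedily, processing the blocks of $\cH$ from left to right and opening a new $\omega_j$ whenever the running $\sqrt{\alpha\beta}$ within the current group first exceeds a threshold $\sim R^{-1}$. Condition (A) is then automatic by construction, while (B)---the bound $n(R) = O(R^d)$---must be extracted from $\len M(R)\rin \le e^{KR^{p+\von}}$ combined with a matching lower bound on $\len M(R)\rin$ along a suitably chosen ray, expressing $\len M\rin$ as a product of factors attached to the $\omega_j$.

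\emph{Main obstacle.} The upper bound is routine once Theorem~1 is available. The hard step is the lower bound, and specifically: showing that the greedy blocks $\omega_j$ contribute multiplicatively to $\len M(R)\rin$ in a way that lets $n(R) = O(R^{p+\von})$ be read off the order. A Jensen or zero-density argument alone gives only averaged information about $\len M(z)\rin$ on large circles, whereas (A) and (B) are inherently one-dimensional, referring to the string partition directly. The Krein-string product structure is exactly what permits this transfer; the technical heart of the proof will be the requisite matrix-norm lower bound on a well-chosen ray, tight enough that the additive splitting across the greedy blocks survives.
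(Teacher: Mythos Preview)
Your upper bound is essentially the paper's argument: both feed the covering into Theorem~1 with $\cH_R$ piecewise equal to $\frak H_1$ or $\frak H_2$ on (subintervals of) the $\omega_j$, choose $a_j$ so that (i) and (ii) collapse to $\sum\sqrt{\alpha_j\beta_j}$, and accept the $\log R$ loss in (iii)--(iv) via the $\von$-room in $d$. The paper takes a single constant on each $\omega_j$ (whichever of $\frak H_1,\frak H_2$ dominates) and uses $a_j^2=\max\{1/(R|\omega_j|),\sqrt{\alpha_j\beta_j}/|\omega_j|\}$, but your two-piece variant with the truncated $a_j$ works just as well.

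For the lower bound you have the right skeleton---the paper also builds the covering greedily so that each $\omega_j$ satisfies $\sqrt{\alpha_j\beta_j}=R^{-1}$ exactly, reducing (A) to (B)---but the ``matrix-norm lower bound on a ray'' you allude to is the entire content, and the paper supplies a specific mechanism you are missing. The key is a lemma of Kats: writing $\xi_\tau(x)=M_{11}(i\tau,x)$ and $\rho(s,t)=|(s,t)\cap X_2|$, the integral equation for $M_{11}$ shows $\xi_\tau$ is positive and nondecreasing on the imaginary axis, and a short manipulation yields the pointwise differential inequality
\[
\frac{\xi_\tau'(x)}{\xi_\tau(x)}\;\ge\;\frac{1}{2\,\rho(s(\tau,x),x)}\qquad\text{for a.e. }x\in X_2,
\]
where $s(\tau,x)$ is defined by $\tau^2|(s,x)\cap X_1|\,|(s,x)\cap X_2|=1$. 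Integrating gives that the order equals $\limsup_{\tau}\bigl(\int_a^L \chi_2(x)/\rho(s(\tau,x),x)\,dx\bigr)/\log\tau$. The greedy points are then $x_{j+1}=s(R,x_j)$, and the Kats integral bounds $\sum_j s_j/(s_j+s_{j+1})$ with $s_j=\rho(x_{j+1},x_j)$. This does \emph{not} immediately bound $N(R)$, since ratios $s_{j+1}/s_j$ can be large; the paper splits off the indices with $s_{j+1}/s_j>2$ and uses $s_j\in[1/(LR^2),L]$ to cap each geometric run at length $O(\log R)$, obtaining $N(R)=O(R^d\log R)$.

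So the gap in your proposal is twofold: you need the Kats differential inequality (not a generic product/Jensen estimate) to convert the order into control of the integral $\int\chi_2/\rho$, and you then need the ratio-splitting argument to pass from that integral to an honest count of greedy blocks.
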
 

We give several examples of application of Theorems 1 and 2. Namely, we establish upper estimates of the order in terms of smoothness for Hamiltonians from classical smoothness classes (H\"older, bounded variation) by applying Theorem 1, see Corollary \ref{var}, prove a conjecture of Valent about order of a certain class of Jacobi matrices (Corollary \ref{Valenthyp}), and give a rather short calculation of the order for the Cantor string (see Section \ref{Castr}).

The first result on the order problem we are aware of is the 1939 theorem of Liv\v sic \cite{Livshitz} saying that the order of a canonical system corresponding to an indeterminate moment problem with moments $ \gamma_j $ is not less than $ \limsup_{ n \to \infty } (2n \log n ) /\log \gamma_{ 2n}  $, the order of the entire function $ \sum z^{ 2j }/ \gamma_{ 2j } $. A modern two-line proof of this assertion can be found in \cite{BergSzwarc}. 
The next result, essentially due to Berezanski\u{\i} \cite{Berez}, is formulated in terms of Jacobi matrices. Berezanski\u{\i} studied Jacobi matrices of the form 
\be\la{Jac} \( \begin{array}{cccccc} q_1 & \rho_1 & 0 & \dots & & \cr 
\rho_1 & q_2 & \rho_2 & 0 & \dots & \cr
0 & \rho_2 & q_3 & \rho_3 & 0 & \dots \cr  & 0 & \ddots & \ddots & \ddots & \ddots \end{array} \) \ee 
with $ \rho_j$ growing. Although he did not explicitly address the problem of order, he made a crucial technical observation that allows to estimate the corresponding orthogonal polynomials at large $ j $ in terms of $ \rho_j^{ -1 } $. The explicit translation of his result to the order problem is given in \cite{BergSzwarc}. It says essentially that if $ \rho_j $ is a log-convex or log-concave sequence at large $ j $, and $ q_j $ is small relative to $ \rho_j $, then the order of the system coincides with the convergence exponent for the sequence $ \rho_j $. More precisely, the following assertion holds.

\begin{theorem}\la{Bsw}\cite{BergSzwarc} Let (\ref{Jac}) be a limit-circle Jacobi matrix. If $ \rho_j $ satisfies $ \rho_{ j- 1} \rho_{ j+1 } \le \rho_j^2 $ for all $ j $ large enough, and $ q_j / \rho_{ j-1 } \in l^1 $, then the order of the system equals to $ \inf \{ \alpha > 0  \colon \rho_j^{ - \alpha } \in l^1 \} $. The same result holds if $ \rho_j $ satisfy the inequality $ \rho_{ j- 1} \rho_{ j+1 } \ge \rho_j^2 $ instead.
\end{theorem}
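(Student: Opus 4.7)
The plan is to prove matching upper and lower bounds on the order, both equal to $\tau := \inf\{\alpha>0 \colon \sum_j \rho_j^{-\alpha}<\infty\}$. The lower bound follows from the Liv\v sic estimate quoted in the introduction: for the Jacobi matrix (\ref{Jac}) the moments $\gamma_{2n} = \langle J^{2n}\delta_1,\delta_1\rangle$ admit a path-counting expansion as a sum over closed walks of length $2n$ on $\N$ rooted at $1$, and the hypothesis $q_j/\rho_{j-1}\in l^1$ reduces this to $\log\gamma_{2n} = 2\sum_{j\le n}\log\rho_j + O(n)$. Combined with log-monotonicity this gives $\log\gamma_{2n}\sim 2n\log\rho_n$, so Liv\v sic's lower bound $\limsup (2n\log n)/\log\gamma_{2n}$ simplifies to $\limsup \log n/\log\rho_n$, which is exactly $\tau$.

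For the upper bound I would apply Theorem 1 to the canonical system naturally associated with (\ref{Jac}). This association produces a finite-rank Hamiltonian $\cH$ whose $j$-th block has length $\ell_j \asymp \rho_j^{-2}$ (modulo a bounded multiplicative perturbation coming from $q_j$) and direction vector $e_j\in\R^2$ assembled from the orthonormal polynomials of the first and second kind at the origin. Given $d>\tau$, let $N = N(R)$ be the least integer with $\sum_{j>N}\rho_j^{-2}\le R^{d-1}$; set $\cH_R = \cH$ on $[0,x_N]$ and freeze $\cH_R$ to a single rank-one block on $(x_N,L)$, and take the weights $a_j$ of the form $a_j \asymp \rho_j^{-s}$ with $s\in(0,1)$ calibrated to $d$. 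Condition (i) of Theorem 1 is then a tail estimate governed by the choice of $N$; condition (ii) reduces to $\sum a_j^2 \rho_j^{-2} \lesssim R^{d-1}$; and conditions (iii)--(iv) are controlled via log-monotonicity, which makes the sums $\sum\log(1+\len P_j - P_{j+1} \rin/(a_j a_{j+1}))$ and $\sum|\log(a_j/a_{j-1})|$ telescope into quantities of size $N(R)+\log\rho_{N(R)} = O(R^d)$.

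The main obstacle is the simultaneous fulfilment of conditions (i)--(iv) with a single coordinated choice of $(a_j, N(R))$. Condition (iii) is where the log-convex/log-concave dichotomy enters most sharply: in the log-convex case the directions $e_j$ stabilize rapidly, making (iii) cheap but (ii) tight, and in the log-concave case the roles reverse. Identifying the correct exponent $s$ in each regime and verifying the four conditions uniformly is the bulk of the work. Once this is done, Theorem 1 yields $\len M(z) \rin \le e^{K|z|^d}$ for every $d>\tau$, hence order $\le \tau$, which matches the lower bound and concludes the proof.
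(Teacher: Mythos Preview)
Your high-level plan (Liv\v sic for the lower bound, Theorem 1 for the upper) matches the paper, which in Section \ref{comparis} rederives only the upper estimate from Theorem 1 and cites \cite{BergSzwarc} for the rest. But two steps in your upper-bound execution would fail. First, the block lengths are not $\ell_j\asymp\rho_j^{-2}$: in the Kats correspondence (Theorem \ref{Ka}) with $q_j=0$ one has $e_j\perp e_{j-1}$, hence $\rho_j^{-1}=\sqrt{\delta_j\delta_{j+1}}$, and solving this recursion gives $\delta_{j+1}=\bigl(\rho_{j-1}\rho_{j-3}\cdots\big/\rho_j\rho_{j-2}\cdots\bigr)^2$. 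It is precisely the log-convexity/log-concavity hypothesis (monotonicity of $\rho_{j-1}/\rho_j$) that then forces $\delta_j=O(\rho_{j-1}^{-1})$ and hence $\{\delta_j\}\in l^d$ for every $d>\tau$; this is where the dichotomy actually enters, not in condition (iii). Indeed for $q_j=0$ the directions alternate by $\pi/2$ in both regimes, so $\|P_j-P_{j+1}\|\equiv 1$ and there is no ``stabilization'' of the $e_j$ in either case. Second, weights $a_j\asymp\rho_j^{-s}$ with $s$ fixed independently of $R$ cannot satisfy (ii): the sum $\sum_{j<N} a_j^2\delta_j$ is bounded below by its first term, a fixed positive number, while (ii) demands $O(R^{d-1})\to 0$.

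The paper's route is much simpler. Once $\{\delta_j\}\in l^d$ is in hand, set $\frak N=\{j:\delta_j>R^{-1}\}$, take $\cH_R=\cH$ on those blocks with the \emph{constant} weight $a_j=R^{(d-1)/2}$, and collapse the complement into a single tail block with $a_j=1$. Then (i)--(ii) reduce to the tail bound $\sum_{\delta_j\le R^{-1}}\delta_j=O(R^{d-1})$ and (iii)--(iv) to $\#\frak N=O(R^d)$, both immediate from $\{\delta_j\}\in l^d$. Nonzero $q_j$ under the $l^1$ smallness condition is absorbed afterwards by perturbation.
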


In the language of canonical systems, these results refer to a special class of Hamiltonians defined as follows. Let $ b_j $ be a bounded sequence of reals, $ 0 = b_0 < b_1 < b_2 < \dots $, $ L = \lim b_j $, and $ e_j \in {\mathbb{R}}^2 $, $ j \ge 1 $, a sequence of vectors of unit norm, $ e_j \ne \pm e_{j-1} $. Let $ \Delta_j = ( b_{ j-1 } , b_j ) $, $ j \ge 1 $. 
Define the Hamiltonian $ \cH $ on $ ( 0 , L ) $ corresponding to these sequences by  
\be\la{HamJacobi} \cH ( x ) = \langle \cdot , e_j \rangle e_j , \; \; x \in \Delta_ j . \ee
The correspondence between Hamiltonians of this form and limit-circle Jacobi matrices is described in detail in \cite{Katz}. Upon suitable normalization it is one-to-one, the corresponding selfadjoint operators are unitarily equivalent, and the Jacobi parameters $ q_j $, $ \rho_j $ are expressed via $ e_j $ and $ b_j $ by explicit formulae. The relation of Theorem \ref{Bsw} and our result is that the relevant part of Theorem \ref{Bsw} (the order is not greater than the convergence exponent) easily follows from Theorem 1 applied to Hamiltonians of this class, see Section \ref{comparis} for details.

Apart from the mentioned general results, there are several isolated explicitly solvable non-trivial examples of Jacobi matrices for which the order is known and is non-zero. Two of them were found by Valent and his collaborators in  \cite{BergValent} (order $ 1/4 $) and \cite{GLV} (order $ 1/3 $). In these examples, motivated by studies of the birth-death processes, $ q_n $ and $ \rho_n^2 $ are polynomials, $ | q_n | \sim \rho_n $ at infinity. On their basis it was conjectured in \cite{Valent} that in a class of Jacobi matrices with polynomial $ \rho_n^2 $ and $ q_n $ the order is $1/\deg q_n $. As explained below, the fact that the order is not less than $ 1 / \deg q_n $ is almost trivial, hence the hypothesis is essentially that the order does not exceed $ 1 / \deg q_n $. We establish the latter in Corollary \ref{Valenthyp} applying Theorem 1. 

Another set of examples in the order problem comes from studies of non-Weyl spectral asymptotics for 1D differential operators. Apparently the first result in this direction is due to Uno and Hong \cite{UH} who have found the order for the Cantor string. In \cite{SolVerb},  the authors calculated the order (in fact, they found, in a sense, the whole leading term), for a class of strings with self-similar weights. The order is also known for a rather general class of strings related to so called $ d $--sets \cite{Triebel}. Notice that \cite{UH,SolVerb} rely on the variational prinicple for the eigenvalues, hence their methods are apparently unsuitable to obtain results like Theorem 1 because of lack of semiboundedness. 

A general formula for the order of a string was obtained in \cite{Katz1}. In the situation of Theorem 2 it says that the order of the system $ ( \cH , L) $ is 
\be\la{Katzformula} \inf\left\{ d> 0 \colon \int_0^{ \tilde L } dM( x) \int_0^{ \min \{ x , \tilde L-x \} } \( s ( M ( x+s ) - M ( x -s ) ) \)^{ \frac d2 - 1} \diff s < \infty \right\} .\ee
Here $ M $ is a non-decreasing singular function on an interval $ [ 0 , \tilde L ] $, $ \tilde L + M ( \tilde L ) = L $, such that $ X_1 = \{ x + M ( x ) \colon x \in [ 0 , \tilde L ], \, M^\prime ( x ) = 0 \} $.  
This formula, to the best of our knowledge, has never been used to calculate the order of an actual string of the class considered in this paper (see also Section \ref{Katzf}). We use an argument from the proof of (\ref{Katzformula}) in \cite{Katz1} in the derivation of Theorem 2, see  Lemma \ref{Katzform}.   

The structure of the paper is as follows. In Section \ref{deBrK} we reproduce a proof of the inequality "lhs of (\ref{KdeB})"$ \le $ "rhs of  (\ref{KdeB})" from \cite{deBrII} with a minor simplification. The reason we give it here is that it provides one of the ideas used in the proof of Theorem 1. The proofs of Theorems 1 and 2 occupy  sections named accordingly. In the Comments sections we discuss the assumptions of these theorems and compare them with the earlier results. In the Applications section we establish upper bounds for the order in smooth classes and prove the Valent conjecture.

Throughout the paper the norm signs refer to the operator norm for $ 2 \times 2 $ matrices, $ \frak H_{ 1,2 } $ are the matrices defined before Theorem 2. Unless specified otherwise summations extend to all values of the summation parameter for which the summand is defined. $ C $ stands for any constant whose exact value is of no interest for us. Given a Jacobi matrix (\ref{Jac}), $ P_j ( \lambda ) $ and $ Q_j (\lambda) $ stand for the solutions of the corresponding three-term requrrency relation subject to the initial conditions $ P_1 = 1 $, $ P_0 = 0 $, $ Q_1 = 0 $, $ Q_2 = 1/\rho_1 $ (orthogonal polynomials of the first and second kind, respectively). 

\section{The upper estimate in the Krein--de Branges formula}\la{deBrK}

\begin{proposition} Let $ ( \cH , L ) $ be a canonical system. Then 
\[ \mbox{type of } ( \cH , L ) \le \int_0^L \sqrt{ \det \cH ( t ) } \diff t . \]
\end{proposition}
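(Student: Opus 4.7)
My plan is to approximate $\cH$ by piecewise constants, bound the monodromy of each constant piece explicitly, and then pass to the limit.

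First I would partition $[0,L]$ into $N$ sub-intervals $[x_j,x_{j+1}]$ of lengths $\ell_j$, set $h_j=\ell_j^{-1}\int_{x_j}^{x_{j+1}}\cH(t)\,\diff t$, and denote by $\cH_N$ the resulting piecewise constant Hamiltonian, whose monodromy factorizes as $M_N(z)=\prod_j e^{-\ell_j z Jh_j}$ (ordered product). The key ingredient is the norm estimate
\[ \|e^{-\ell z Jh}\|\le C(h)\,e^{\ell\sqrt{\det h}\,|z|},\qquad z\in\C, \]
for each positive semidefinite $2\times 2$ matrix $h$. To prove it I would write $h=A^2$ with $A\ge 0$, observe that $AJA$ is antisymmetric with $\det(AJA)=\det h$, so $AJA=\pm\sqrt{\det h}\,J$; hence $Jh=\pm\sqrt{\det h}\,A^{-1}JA$ when $A$ is invertible, and therefore
\[ e^{-\ell z Jh}=A^{-1}\exp\bigl(\mp\ell z\sqrt{\det h}\,J\bigr)A. \]
The inequality $\|e^{tJ}\|\le 2e^{|t|}$ (from $J^2=-I$) then gives the claim with $C(h)=2\|A\|\|A^{-1}\|$. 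The degenerate case $\det h=0$ is handled by noting that $Jh$ is nilpotent, so $e^{-\ell z Jh}=I-\ell z Jh$ has polynomial growth in $z$.

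Taking the product yields $\|M_N(z)\|\le K_N\,e^{|z|S_N}$, where $K_N=\prod_j C(h_j)$ is independent of $z$ and $S_N=\sum_j\ell_j\sqrt{\det h_j}$. By the Lebesgue differentiation theorem, $h_j\to\cH(x)$ for a.e.\ $x$ as the mesh shrinks; continuity of $\sqrt{\det}$ and dominated convergence then give $S_N\to T:=\int_0^L\sqrt{\det\cH(t)}\,\diff t$.

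The main obstacle is transferring the bound from $M_N$ to $M$ while preserving the exponential type. A Gronwall comparison shows $M_N\to M$ locally uniformly in $\C$ via $\|\cH-\cH_N\|_{L^1}\to 0$, but the straightforward Gronwall estimate on $\|M-M_N\|$ grows like $e^{|z|L}$, which is of worse type than the desired $e^{|z|T}$. To handle this, I would fix $\varepsilon>0$, choose $N=N(\varepsilon)$ with $S_N<T+\varepsilon$, and then exploit the entire-function structure of $M$ and $M_N$: since both have finite exponential type and $M_N\to M$ on compacts, a Phragm\'en--Lindel\"of or Vitali-type argument transfers the bound $\|M_N(z)\|\le K_N\,e^{|z|(T+\varepsilon)}$ into a bound $\|M(z)\|\le K\,e^{|z|(T+2\varepsilon)}$. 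Letting $\varepsilon\to 0$ concludes the proof.
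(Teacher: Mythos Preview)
Your per-factor bound is correct and rests on the same linear-algebra fact the paper uses (diagonalization of $Jh$ via $h^{1/2}$), but the limit step is a genuine gap. The constant $K_N=\prod_j\|A_j\|\,\|A_j^{-1}\|$ is a product of $N$ condition numbers, each at least $1$, and in general blows up as $N\to\infty$; already for a constant Hamiltonian $\cH\equiv h$ with condition number $>1$ it grows geometrically. With $K_N$ unbounded, local-uniform convergence $M_N\to M$ yields no bound on the type of $M$: neither Phragm\'en--Lindel\"of nor Vitali manufactures the missing uniformity in $z$. And for a single fixed $N$, knowing the type of $M_N$ together with a Gronwall comparison that controls $M-M_N$ only up to type $L$ cannot produce a type bound on $M$ below $L$. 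The sentence ``a Phragm\'en--Lindel\"of or Vitali-type argument transfers the bound'' is not an argument; it is precisely the step that needs a new idea.

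The paper avoids this accumulation of constants by working infinitesimally. It sets $p(x)$ equal to the type of $M(x,\cdot)$, shows $p$ is Lipschitz, and from the Gronwall estimate with a fixed conjugator $\Omega$ obtains $p(x)\le p(y)+\int_y^x\|\Omega J\cH(t)\Omega^{-1}\|\,\diff t$, hence $p'(x)\le\|\Omega J\cH(x)\Omega^{-1}\|$ for \emph{every} invertible $\Omega$. Minimizing over $\Omega$ at each point (this is exactly your identity $AJA=\pm\sqrt{\det h}\,J$, rephrased as $\inf_\Omega\|\Omega J\cH(x)\Omega^{-1}\|=\sqrt{\det\cH(x)}$) gives $p'(x)\le\sqrt{\det\cH(x)}$ a.e., and integration finishes. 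The key point is that the optimization is performed at each $x$ \emph{after} passing to the type, so the conjugating matrices are never multiplied together and no product of condition numbers appears.
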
 

Let $ p ( x ) $ be the exponential type of $ M ( x , \lambda ) $. For each $ y \in ( 0 , L ) $ the monodromy matrix satisfies the integral equation
\be\la{intmono} M ( x , \lambda ) = M ( y , \lambda ) - \lambda \int_y^x J \cH ( t ) M ( t , \lambda ) \diff t . \ee 
A crude estimate of the Volterra iterations for this equation shows that $ | p ( x ) - p ( y ) | \le | x - y | $ and thus $ p ( x ) $ is Lipschitz. The idea of the proof is to estimate $ p^\prime (x ) $ in terms of $ \cH $ and then integrate it to obtain the required bound. 

\begin{proof} Let $ \Omega $ be a constant invertible matrix, to be chosen later. Equation (\ref{intmono}) can then be rewritten as follows,
\[ \Omega M ( x , \lambda ) = \Omega M ( y , \lambda ) - \lambda \int_y^x \( \Omega J \cH ( t ) \Omega^{ -1 } \) \Omega M ( t , \lambda ) \diff t . \]
This is a Volterra equation with respect to $ \Omega M $. Solving it by iterations we have (the Gronwall lemma),
\be\la{estGron} \len \Omega M ( x , \lambda ) \rin \le  \len \Omega M ( y , \lambda ) \rin \exp \( \left| \lambda \right| \int_y^x \len \Omega J \cH ( t ) \Omega^{ -1 } \rin \diff t \) \ee
for all $ y \le x $. It follows that $ p ( x ) $ satisfies 
\[ p ( x ) \le p ( y ) +   \int_y^x \len \Omega J \cH ( t ) \Omega^{ -1 } \rin \diff t . \] 
Taking the limit $ y \uparrow x $ we obtain that for a. e. $ x \in [ 0 , L ] $
\[ p^\prime ( x ) \le  \len \Omega J \cH ( x ) \Omega^{ -1 } \rin . \]
The lhs does not depend on $ \Omega $, hence let us minimize the rhs in $ \Omega $.

\begin{lemma}\la{variat} 
Let $ A $ be a $ 2 \times 2 $-matrix with $ \operatorname{tr} A = 0 $, then 
\be\la{zerotr} \inf_{ \Omega \colon \det \Omega \ne 0 } \len \Omega A \Omega^{ -1 } \rin = \sqrt{ | \det A | } . \ee
\end{lemma}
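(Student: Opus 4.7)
Since $\operatorname{tr} A = 0$, the characteristic polynomial of $A$ is $t^2 + \det A$, so the eigenvalues of $A$ are $\pm \lambda$ with $\lambda^2 = -\det A$; in particular the spectral radius satisfies $\rho(A) = \sqrt{|\det A|}$. The lower bound in the lemma is then immediate: spectral radius is invariant under similarity and is dominated by the operator norm, so
\[
\| \Omega A \Omega^{-1} \| \ge \rho(\Omega A \Omega^{-1}) = \rho(A) = \sqrt{|\det A|}
\]
for every invertible $\Omega$, and hence the infimum is at least $\sqrt{|\det A|}$.

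For the matching upper bound I would split into cases on $\det A$. When $\det A \ne 0$, the eigenvalues $\pm \lambda$ are distinct, so $A$ is diagonalizable over $\C$ (with complex $\Omega$ allowed when $\det A > 0$): one can choose $\Omega$ with $\Omega A \Omega^{-1} = \operatorname{diag}(\lambda, -\lambda)$. The right-hand side is a normal matrix whose operator norm equals $|\lambda| = \sqrt{|\det A|}$, so the infimum is attained and coincides with the lower bound.

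When $\det A = 0$ the claim reduces to showing that the infimum is $0$. If $A = 0$ this is trivial; otherwise $A$ is a nonzero nilpotent, and after a preliminary similarity we may assume $A = \begin{pmatrix} 0 & 1 \\ 0 & 0 \end{pmatrix}$. Conjugating by $\operatorname{diag}(1, \varepsilon)$ yields $\begin{pmatrix} 0 & \varepsilon \\ 0 & 0 \end{pmatrix}$, whose operator norm $\varepsilon$ can be made arbitrarily small. The only subtlety in the whole argument is precisely this degenerate case, where the infimum is approached but not attained; since the lemma is stated as an infimum, this causes no difficulty.
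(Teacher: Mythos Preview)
Your argument is correct and follows essentially the same route as the paper: diagonalize when $\det A \ne 0$, and in the nilpotent case reduce to a Jordan block and conjugate by a diagonal scaling to drive the norm to zero. The only difference is cosmetic---you spell out the spectral-radius lower bound explicitly, whereas the paper absorbs both directions of the $\det A \ne 0$ case into the word ``trivial''; your nilpotent normal form is the transpose of the paper's, which is immaterial.
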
 
\begin{proof} 
If $ \det A \ne 0 $ the lemma is trivial -- it suffices to choose $ \Omega $ to be the diagonalizer of $ A $. If $ \det A = 0 $, then without loss of generality one can take $ A = \begin{pmatrix} 0 & 0 \cr 1 & 0 \end{pmatrix} $, $ \Omega = \mbox{diag} \( a^{ -1 } , a \) $, and send $ a \to 0 $.
\end{proof}

Applying this lemma gives $ p^\prime ( x ) \le \sqrt{ \det \cH ( x ) } $ a. e., and the assertion follows by integrating this inequality. \end{proof}

This proof is essentially the one in \cite[Theorem X]{deBrII} except that de Branges uses an explicit reduction of the matrix $ J \cH ( x) $ rather than mere existence of a diagonalizer.

\section{Proof of Theorem 1}

\subsection{The estimate.} Let $ ( \cH , L ) $ be a canonical system. For an arbitrary finite set of numbers $ x_j $, $ 0 \le j \le N $, such that $ 0 = x_0 < x_1 < x_2 < \dots < x_N = L $, and arbitrary invertible matrices $ \Omega_j $, $ 0 \le j \le N $, an argument from the proof of Proposition 1 (consider (\ref{estGron}) with $ y = x_j $, $ x = x_{ j+1 } $, $ \Omega = \Omega_j $) shows that
\[ \len \Omega_j  M ( x_{j+1} , \lambda ) \rin \le  \len \Omega_j M ( x_j , \lambda ) \rin \exp \( \left| \lambda \right| \int_{ x_j}^{x_{j+1}} \len \Omega_j J \cH ( t ) \Omega_j^{ -1 } \rin \diff t \)  \]
for $ 0 \le j < N $. 
With the notation $ M_j = M ( x_j , \lambda ) $ we then have
\bequnan \len \Omega_{ j+1 } M_{ j+1 } \rin \le \len \Omega_{ j+1 } \Omega_j^{ -1 } \rin \cdot \len \Omega_j M_{j+1} \rin \le \len \Omega_{ j+1 } \Omega_j^{ -1 } \rin \len \Omega_j M_j \rin \\ \exp \( | \lambda |  \int_{ x_j}^{x_{ j+1 }} \len \Omega_j J \cH ( t ) \Omega_j^{ -1 } \rin \diff t \)  . \eequnan
Taking logarithm, summing the resulting inequalities in $ j $ and choosing $ \Omega_N = I $ we get,
\be\la{norm} \log \len M ( \lambda ) \rin \le | \lambda |  \sum_{j = 0 }^{ N-1 } \int_{ x_j}^{x_{ j+1 }} \len \Omega_j J \cH ( t ) \Omega_j^{ -1 } \rin \diff t + \sum_{ j=0 }^{ N-1 } \log \len  \Omega_{ j+1 } \Omega_j^{ -1 } \rin + \log \len \Omega_0 \rin . \ee
Let $ P_j $ be an orthogonal rank 1 projection, $ P_j = \langle \cdot , e_j \rangle e_j $, $ e_j \in \R^2 $, $\| e_j \| = 1 $. Then the summand in the first sum in the rhs can be estimated as follows,
\begin{eqnarray}\la{OmegaH} \int_{ x_j}^{x_{ j+1 }} \len \Omega_j J \cH ( t ) \Omega_j^{ -1 } \rin \diff t \le \int_{ x_j}^{x_{ j+1 }} \len \Omega_j J \( \cH ( t ) - P_j \) \Omega_j^{ -1 } \rin \diff t + ( x_{ j+1} - x_j ) \len \Omega_j J P_j \Omega_j^{ -1 } \rin \nonumber \\ \le \len \Omega_j \rin  \len \Omega_j^{ -1 } \rin \int_{ x_j}^{x_{ j+1 }} \len \cH ( t ) - P_j \rin \diff t + ( x_{ j+1} - x_j ) \len \Omega_j J P_j \Omega_j^{ -1 } \rin . \end{eqnarray}
Let us choose the matrices $ \Omega_j $. The choice is suggested by the proof of Lemma \ref{variat},
\[ \Omega_j = \operatorname{diag} \( a^{ -1 }_j , a_j \) U_j , \]
where $ U_j $ is a unitary transform reducing $ J P_j $ into its Jordan form, 
\[ U_j J P_j U_j^{ -1 } = \begin{pmatrix} 0 & 0 \cr 1 & 0 \end{pmatrix} , \] 
and $ a_j \in ( 0 , 1 ] $. More precisely, we set $ U_j = e^{ - \varphi_j J } $ with $ \varphi_j \in [ 0 , 2 \pi ) $ defined by $ e_j = \begin{pmatrix} \cos \varphi_j \cr \sin \varphi_j \end{pmatrix} $. With this choice
 
$ 1^\circ $. $ \len \Omega_j \rin = \len \Omega_j^{ -1 } \rin =  a_j^{ -1 } $, $ \len  \Omega_j J P_j \Omega_j^{ -1 } \rin = a_j^2 $, and one can continue the estimate (\ref{OmegaH}),
\be\la{OmegaH1} \textrm{rhs of } (\ref{OmegaH}) \le \frac 1{a_j^2} \int_{ x_j}^{x_{ j+1 }} \len \( \cH ( t ) - P_j \) \rin \diff t + a_j^2 \( x_{ j+1 } - x_j \)  . \ee

$ 2^\circ $. For $ j \le N-2 $ 
\bequnan \Omega_{ j+1 } \Omega_j^{ - 1 } & = &\begin{pmatrix} a_{ j+ 1 }^{ -1 } & 0 \cr 0 & a_{j+1} \end{pmatrix} U_{ j+1 } U_j^{ -1 }  \begin{pmatrix} a_j & 0 \cr 0 & a_j^{ - 1 } \end{pmatrix} = \\ & & \begin{pmatrix} a_{ j+ 1 }^{ -1 } & 0 \cr 0 & a_{ j+1 } \end{pmatrix} e^{ \( \varphi_j - \varphi_{ j+1 } \) J } \begin{pmatrix} a_j & 0 \cr 0 & a_j^{ - 1 } \end{pmatrix} = \\ & & \begin{pmatrix} a_j  a_{ j+ 1 }^{ -1 } & 0 \cr 0 & a_{j+1}  a_j^{ -1 }  \end{pmatrix}  + O \( \frac{ \| P_{ j+1 } - P_j \| }{ a_j a_{ j+1 } } \) . \eequnan
Then ($ \log ( x +y ) \le | \log y | + \log ( 1 + x ) $ for $ x , y > 0 $)
\[ \log \len  \Omega_{ j+1 } \Omega_j^{ - 1 } \rin \le  \left| \log \( a_j a_{ j+ 1 }^{ -1 } \) \right| +  C \log \( 1+  \frac{ \| P_{ j+1 } - P_j \| }{ a_j a_{ j+1 } } \) . \]
Plugging this and (\ref{OmegaH1}) in (\ref{norm}) and taking into account that $ \len \Omega_0 \rin = a_0^{ -1 } $,  $ \len \Omega_{ N-1 }^{ -1 } \rin = a_{ N-1 }^{ -1 } $, we obtain the first assertion of the theorem.

\subsection{Sharpness.} Let $ p \in ( 0 , 1 ) $, $ \alpha = p^{ -1 } - 1 $, $ d = p + \von $. Define $ b_j = 1 - j^{ -\alpha } $ for $ j \ge 1 $, and for $ x \in [ 0 , 1 ] $ let
\[ \cH ( x ) = \begin{cases} \frak H_1 ,\; x \in \bigcup_j [ b_{ 2j-1 } , b_{ 2j } ]  \cr 
\frak H_2 ,\;  x \notin \bigcup_j [ b_{ 2j-1 } , b_{ 2j } ]  . \end{cases} \]

The required assertion will be proved if we show that (a) $ \cH $ satisfies the conditions (i)--(iv) of the theorem for all $ \von > 0 $, (b) the order of the system $ ( \cH , 1 ) $ is not less than $ p $. Given an $ R> 0 $, define 
\[ \cH_R ( x ) = \begin{cases} \frak \cH ( x ) ,\; x \in [0 ,  b_{ N-1}] \cr 
\frak H_1 , \;  x \in [ b_{ N-1 } , 1 ]  , \end{cases} \] with $ N = N( R) $ to be chosen later. Let $ a_{ N-1 } = 1 $, $ a_j = R^{ \frac { d-1 }2 } $ for $ j \le N-2 $. We then have,
\bequnan \textrm{lhs of (i)} & \le & \frac 2{ a_{ N-1 }^2  \( N-1 \)^\alpha } = O \( N^{ - \alpha } \) , \\
\textrm{lhs of (ii)} & \le & C \sum_0^{ N-2 } a_j^2 j^{ -1-\alpha } + N^{ -\alpha } = O \( R^{ d-1} \) + O \( N^{ - \alpha } \) , \\
\textrm{lhs of (iii)} & \le & C ( N - 1 ) \log R + O ( \log R ) = O \( N \log R \) , \\
\textrm{lhs of (iv)} & = &  O ( \log R ) . 
\eequnan 
Let $ N \sim R^p $ as $ R \to \infty $. Then assumptions (i)--(iv) are satisfied.

Let us now establish that the order of the system is not less than $ p $. To this end, we use the following identity. Let $ \Theta ( x , \lambda ) $ be the first column of $ M ( x , \lambda ) $. Differentiating $ \llangle \Theta , J \Theta \rrangle_{ \C^2 } $ with respect to the equation (\ref{can}), we find
\be\la{identM11} 
\Im \( M_{11} ( 1 , \lambda ) \overline{M_{21} \( 1 , \lambda \) } \) = \Im  \lambda \int_0^1 \llangle \cH ( t ) \Theta ( t , \lambda ) , \Theta ( t , \lambda ) \rrangle_{ \C^2 }  \diff t . \ee 

In the situation under consideration the rhs is 
\[ \Im \lambda \sum \( b_j - b_{j-1} \) \left| \begin{array}{cc} M_{11} ( b_j , \lambda ) , & j \textrm{ even} \cr M_{21} ( b_j , \lambda ) , & j \textrm{ odd} \end{array} \right|^2  . \]

For $ \Im \lambda > 0 $ one can estimate this quantity from below. Let $ \delta_j = b_j - b_{ j-1 } $. We have
\be\la{estM11} \textrm{rhs of (\ref{identM11})} \ge \Im \lambda  \sum \delta_{ 2j } \left| M_{11} ( b_{2j} , \lambda ) \right|^2  = \Im \lambda  \sum \delta_{ 2j } \left| M_{11} ( b_{2j-1} , \lambda ) \right|^2 . \ee
In the last equality we took into account that $ M_{11} ( b_{ 2j } , \lambda ) = M_{11} ( b_{ 2j -1 } , \lambda ) $ in the situation under consideration as $ M_{11}^\prime ( x , \lambda ) = 0 $ when $ x \in ( b_{ 2j-1 } , b_{ 2j } ) $.

To estimate the rhs of (\ref{estM11}) from below we use the following corollary of the fact that all the zeroes of matrix elements of $ M ( x , \lambda ) $ are real. 

\begin{remark} Let $ ( G , L ) $ be a canonical system such that $ ( 0 , L ) $ is a union of disjoint intervals, $ I_j $, accumulating only at $ L $, and $ G ( x ) $ is a constant rank 1 operator on each $ I_j $. Then $ M ( x , \cdot ) $ is a polynomial for all $ x \in ( 0 , L ) $. For any $ m, l $, $ 1 \le m,l \le 2 $, define $ k ( x ) $ to be the degree of the polynomial $ M_{ml} ( x , \cdot ) $, $ c ( x ) $ be its leading coefficient,
\[ M_{ ml} ( x , \lambda ) = c ( x ) \lambda^{ k ( x ) } + ( \textrm{a polynomial of degree} \le  k ( x ) - 1 ) .  \]
Then $ | M_{ ml} ( x , i \tau ) | \ge | c ( x ) | \tau^{ k ( x ) } $ when $ \tau > 1 $.
\end{remark} 

Applied in the situation under consideration to the left upper entry of the monodromy matrix at $ x = b_{ 2j-1 } $, this gives $ | M_{11} ( b_{ 2j-1 }, i \tau ) | \ge \left| c_j \right| \tau^{ k_j }$ for $ \tau > 1 $, $ c_j $ and $ k_j $ being the leading coefficient and the degree of the polynomial $ M_{11} ( b_{ 2j -1 } , \cdot ) $, resp., $ j \ge 1 $.

Let us calculate $ k_j $ and $ c_j $.  Define $ M_j ( \lambda ) $ to be the value of the matrix solution of (\ref{can}) with the Cauchy data $ Y ( b_j ) = I $ at $ x = b_{ j+1 } $, then 
\be\la{multt} M ( b_{ 2j -1 } , \lambda ) = M_{ 2j-2 } ( \lambda ) M_{ 2j-3 } ( \lambda ) \cdots M_2 ( \lambda ) M_1 ( \lambda ) \ee
by the multiplicative property of the monodromy matrices. A straightforward calculation gives 
\[ M_ j ( \lambda ) =  I  + \lambda \delta_{j+1} \left\{  \begin{array}{cc} \begin{pmatrix} 0 & 0 \cr -1 & 0 \end{pmatrix} , & j \textrm{ odd} \cr 
\begin{pmatrix} 0 & 1\cr 0 & 0 \end{pmatrix} , &  j \textrm{ even} . \end{array}  \right. \]  
The leading term in the matrix polynomial $ M ( b_{ 2j-1 } , \lambda ) $ comes from choosing the terms of the first order in $ \lambda $ in each multiple in (\ref{multt}). It has the form
\[ \( -1 \)^{j+1} \delta_{ 2j - 1 } \delta_{ 2j-2 } \cdots \delta_2 \begin{pmatrix} 1 & 0 \cr 0 & 0 \end{pmatrix} . \]
Thus, $ k_j = 2j-2 $, $ | c_j | = \prod_{ n = 2 }^{ 2j-1 } \delta_n $, and one can continue the inequality in (\ref{estM11}),
\[ \textrm{rhs of } (\ref{estM11}) \ge \sum \delta_{ 2j }  \( \prod_{ n = 2}^{ 2j-1 } \delta_n^2 \) \left| \lambda \right|^{ 2 ( 2j - 2 ) } \]
On the other hand, if $ \rho $ is the order of the system, then for any $ \von > 0 $ the lhs in (\ref{identM11})  is not greater than $ \exp \( C_\von  r^{ \rho + \von } \) $. By a standard relation between Taylor coefficients and exponential order (s. f. \cite{Levin}) it follows that for large $ j $
\[ \delta_{ 2j }  \( \prod_{ n = 2}^{ 2j-1 } \delta_n^2 \) \le \( \frac Cj \)^{ \frac {4j}{\rho + \von } } . \]
Notice that the sequence $ \delta_j = j^{ - \alpha } - \( j+1\)^{ -\alpha }  $ is monotone decreasing in $ j $, hence the last inequality implies that $ \delta_{ 2j } = O \( j^{ - 1/ ( \rho + \von ) } \) $. Comparing this with $ \delta_j \asymp j^{ - 1 - \alpha } = j^{ -1/p } $, we find $ \rho + \von \ge p $ for all $ \von > 0 $, that is, $ \rho \ge p $. The proof of Theorem 1 is thus completed.

\section{Comments on Theorem 1}

\subsection{Choice of approximants}\la{comont1} Let us consider a canonical system having Hamiltonian of the form $ \cH = \langle \cdot , e ( x ) \rangle e ( x ) $ with $ e ( x ) = \begin{pmatrix} u ( x ) \cr v( x) \end{pmatrix} $, $ u , v $ being smooth functions on $ ( 0 , L ) $ subject to the condition $ u^\prime v - v^\prime u = -1 $. The order of the canonical system with this Hamiltonian is $ 1/2 $, for if $ Y = \begin{pmatrix} Y_+ \cr Y_- \end{pmatrix} $ is a solution of the system, then a straightforward calculation \cite{Remling} shows that $ y = Y_+ u + Y_- v $ satisfies the Schr\"odinger equaiton $ - y^{\prime \prime } + q y = \lambda y $ with the potential $ q = u^{ \prime \prime } / u  $. This suggests that smooth functions cannot be used as approximants, at least in the whole range $ ( 0 , 1 ) $ of orders.

\subsection{Formulation} The Krein -- de Branges formula implies that if assumptions of Theorem 1 are satisfied for some $ d < 1 $ then $ \operatorname{rank} \cH ( x ) = 1 $ a. e. This fact can easily be seen directly.  Indeed, suppose that $ ( \cH , L ) $ is a canonical system such that for any $ R $ large enough the conditions (i)--(iv) are satisfied for some $ d \in ( 0 , 1 ) $. Define $ S_\epsilon = \{ t \colon \| \cH ( t ) f \| \ge \epsilon \| f \| \textrm{ for all } f \in \C^2 \} $, $ \epsilon > 0 $. Arguing by contradiction, let $ \epsilon > 0 $ be such that $ \left| S_\epsilon \right| > 0 $. Applying the Schwarz inequality and using conditions (i) and (ii) we find that $ \sum \sqrt{( x_{ j+1 } - x_j ) \int_{x_j }^{ x_{ j+1 }} \len \cH ( t ) - \cH_R ( t) \rin \diff t } \le C R^{ d-1 } $. The quantity $ \left| [ x_j , x_{ j+1 }] \cap S_\epsilon \right| $ estimates from below both factors in the summand, hence the sum is not less than $ \sum \left| [ x_j , x_{ j+1 }] \cap S_\epsilon \right| = \left| S_\epsilon \right| > 0 $. Taking the limit $ R \to \infty $ we obtain a contradiction.

\subsection{Sharpness} In the example establishing part 2 of Theorem 1 the conditions (i)--(iv) are satisfied with $ d = p $ if we insert the $ \log R $ factor in the rhs. It is not known to the author if one can get rid of the logarithmic factor, that is, part 2 holds with $ \varepsilon = 0 $.  

\subsection{Comparison with Theorem \ref{Bsw}}\la{comparis} The relevant part of Theorem 3 is the assertion that under the stated assumptions if $ \{ \rho_j^{ -1 } \} \in l^\alpha $ then the order $ \le \alpha $. Let us first translate the setup of Theorem 3 to the language of canonical systems. As mentioned in the introduction, given a Jacobi matrix (\ref{Jac}), explicit formulae that express the Jacobi parameters $ q_j $, $ \rho_j $ via the corresponding Hamiltonian are known, see \cite{Katz}. Let us reproduce them in a convenient form. In the following theorem $ P_n $ and $ Q_n $ are the orthogonal polynomials of first and second kind defined by the matrix (\ref{Jac}), respectively, and $ e_j $ and $ b_j $ are parameters of the corresponding Hamiltonian of the type described after Theorem 3. We write $ \varphi_j $ for the argument of the vector $ e_j \in \R^2 $, $ \Delta_j = ( b_{ j-1 } , b_j ) $, $ \delta_j = | \Delta_j | $. 

\begin{theorem}\cite{Katz}\la{Ka}
The correspondence between canonical systems and limit-circle Jacobi matrices can be chosen so that

(i) $  \delta_n = P_n \( 0 \)^2 + Q_n \( 0 \)^2 $,

(ii) \[ \rho_j =  \frac 1{| \sin \( \varphi_j - \varphi_{ j+1 } \) |  \sqrt{ 
\delta_{ j+1} \delta_j }}, \; j \ge 1 . \]
\end{theorem}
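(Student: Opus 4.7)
The plan is to integrate the canonical system on each constancy interval of $\cH$ and extract the orthogonal polynomial recurrence directly from the continuity of the solution across the jumps. On $\Delta_j$ the equation $JY' = \lambda \cH Y$ with $\cH = e_j e_j^T$ preserves the $e_j$-component of $Y$ (since $e_j^T J e_j = 0$), while the $Je_j$-component changes linearly in $x$. Consequently the transfer matrix from $b_{j-1}$ to $b_j$ is the affine matrix $T_j(\lambda) = I - \lambda \delta_j J e_j e_j^T$, so $M(b_n,\lambda) = T_n \cdots T_1$ is a matrix polynomial of degree $n$ satisfying $M(b_n,0) = I$.

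To prove (ii), I would write $Y(x,\lambda) = u_j(\lambda) e_j + v_j(x,\lambda) Je_j$ on $\Delta_j$ and use continuity of $Y$ at $b_j$ together with the elementary identities $e_{j+1}^T J e_j = \sin(\varphi_{j+1} - \varphi_j)$ and $e_{j+1}^T e_j = \cos(\varphi_{j+1} - \varphi_j)$. This produces a first-order recursion for the pair $(u_j, v_j)$ consisting of a rotation by $-(\varphi_{j+1} - \varphi_j)$ plus a correction $-\lambda \delta_{j+1} u_{j+1}$ in the $v$-coordinate. Eliminating the $v_j$'s between consecutive equations yields a three-term recurrence for $\hat u_j = u_j \sqrt{\delta_j}$ of symmetric Jacobi form $\rho_{j-1} \hat u_{j-1} + q_j \hat u_j + \rho_j \hat u_{j+1} = \lambda \hat u_j$; reading off the coefficient of $\hat u_{j+1}$ gives
\[ \rho_j = \frac{1}{|\sin(\varphi_j - \varphi_{j+1})| \sqrt{\delta_j \delta_{j+1}}}, \]
after the signs of the $e_j$ (which are determined by $\cH$ only up to $\pm$) are chosen so that $\rho_j > 0$.

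For (i), take two solutions $Y^{(1)}, Y^{(2)}$ of the canonical system whose initial values at $x = 0$ form an orthonormal basis of $\R^2$, and define
\[ P_n(\lambda) = \sqrt{\delta_n}\, e_n^T Y^{(1)}(b_n,\lambda), \qquad Q_n(\lambda) = \sqrt{\delta_n}\, e_n^T Y^{(2)}(b_n,\lambda). \]
By the recurrence derived above, these are precisely the orthogonal polynomials of first and second kind, provided the initial data $Y^{(1)}(0,\cdot), Y^{(2)}(0,\cdot)$ are chosen so that the normalizations $P_1 = 1$, $Q_1 = 0$, $Q_2 = 1/\rho_1$ hold. At $\lambda = 0$ we have $M(b_n,0) = I$, so $Y^{(\alpha)}(b_n,0) = Y^{(\alpha)}(0,0)$, and therefore
\[ P_n(0)^2 + Q_n(0)^2 = \delta_n \bigl[(e_n^T Y^{(1)}(0,0))^2 + (e_n^T Y^{(2)}(0,0))^2\bigr] = \delta_n, \]
the last equality being Parseval's identity for the orthonormal frame $\{Y^{(1)}(0,0), Y^{(2)}(0,0)\}$ applied to the unit vector $e_n$.

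The main obstacle is careful bookkeeping of normalizations and signs: the Hamiltonian determines each $e_j$ only up to sign, the initial conditions on $(P_n, Q_n)$ are asymmetric, and the three-term recurrence obtained by elimination must be recast into the precise Jacobi form (\ref{Jac}) with matching boundary data at $n = 1$. The verification that the resulting $\ell^2$-operator is unitarily equivalent to the selfadjoint operator of the canonical system, using the limit-circle hypothesis to control the boundary at $x = L$, is built into the choice of $Y^{(1)}(0,\cdot), Y^{(2)}(0,\cdot)$ and forms the technical core of the construction in \cite{Katz}.
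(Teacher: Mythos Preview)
The paper does not contain a proof of this theorem at all: it is stated with the attribution \cite{Katz} and then simply used. There is therefore nothing in the paper to compare your argument against.

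As a standalone sketch your approach is the standard one and is essentially correct. The transfer matrix $T_j(\lambda)=I-\lambda\delta_j J e_j e_j^T$ is right (since $(Je_je_j^T)^2=0$), and eliminating the $Je_j$-component between consecutive intervals does produce a three-term recurrence whose off-diagonal coefficient is $|\sin(\varphi_j-\varphi_{j+1})|^{-1}(\delta_j\delta_{j+1})^{-1/2}$, which gives (ii). For (i) your Parseval argument is the right idea, but note a normalization tension you glossed over: requiring $P_1\equiv 1$ forces $\sqrt{\delta_1}\,e_1^T Y^{(1)}(0,\cdot)\equiv 1$, i.e.\ $e_1^T Y^{(1)}(0,0)=\delta_1^{-1/2}$, which is incompatible with $\{Y^{(1)}(0,0),Y^{(2)}(0,0)\}$ being an \emph{orthonormal} frame unless $\delta_1=1$. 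This is not fatal --- it is absorbed into the phrase ``the correspondence \dots can be chosen so that'' in the statement, and indeed the paper remarks that the Kats correspondence is one-to-one only ``upon suitable normalization'' --- but a complete proof must either impose $\delta_1=1$ as part of the normalization or carry an extra scalar through the computation. You correctly flag this bookkeeping as the main obstacle.
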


Now let $ q_j = 0 $ for simplicity. Then it can be shown \cite{Katz} that $ e_j \perp e_{ j-1 } $, $ e_1 = \begin{pmatrix} 1 \cr 0 \end{pmatrix} $, hence $ \rho_j^{ -1 } = \sqrt{ \delta_j \delta_{ j+1 } } $. Solving for $ \delta_j $ we have 
\[ \delta_{ j+1 } = \( \frac{ \rho_{ j-1 } \rho_{ j-3 } \cdots }{ \rho_j \rho_{ j-2 } \cdots } \)^2 . \] 
Let $ \rho_j $ satisfy the assumption of Theorem \ref{Bsw}. Then $ \rho_{ j-1 } / \rho_j $ at large $ j $ is a monotone sequence having a limit $ \le 1 $, and if the limit is $ 1$, then it is increasing. This implies that $ \delta_j = O \(  \rho_{ j-1 }^{ -1 } \) $ and therefore $ \{ \delta_j \} \in l^d $ for any $ d $ greater than the convergence exponent of the sequence $ \rho_j $. Notice that the mondromy matrix corresponding to the interval $ \Delta_j $ is $ I + O \( |\lambda | \delta_j \)  $. This, the multiplicative property of monodromy matrices and $ \{ \delta_j \} \in l^d $ imply by elementary inequalities that the monodromy matrix of the system in question is $ O \( e^{ C \left| \lambda \right|^d } \) $, which gives the assertion under consideration. 

It is just as easy to derive this assertion from Theorem 1. Let $ d $ be such that $ \{ \rho_j^{ -1 } \} \in l^d $ and  let $ \frak N = \{ j \colon  \delta_j > R^{ -1 } \} $. Define $ \cH_R (x ) =  \cH ( x ) $ whenever $ x \in \Delta_k $, $ k \in \frak N $. On the complement of $ \cup_{ k\in \frak N} \Delta_k $ we define $ \cH_R $ to be an arbitrary constant rank 1 orthogonal projection. With this definition, $ \cH_R $ is a finite rank Hamiltonian with parameters to be denoted $ x_j , f_j $. Let $ a_j = R^{ (d-1)/2 } $ whenever $ j $ is such that $ [ x_j , x_{ j+1 } ] $ coincides with one of the intervals $ \Delta_k $, $ k \in \frak N $, $ a_j =1 $ otherwise. Then conditions (i) and (ii) in Theorem 1 are satisfied because $ \{ \delta_j \} \in l^d $ and so $ \sum_{ \delta_j \le R^{ -1 } } \delta_j  = O \( R^{ d-1 } \) $, conditions (iii) and (iv) are satisfied because the number of $ j $'s for which $ \delta_j > R^{ -1 } $ is $ O \( R^d \) $, again by $ \{ \delta_j \} \in l^d $, and therefore the rank of $ \cH_R $ is $ O \( R^d \) $ as well. Applying Theorem 1 we conclude that under the assumptions of Theorem \ref{Bsw} with $ q_j = 0 $ the order is not greater than $ d$. Thus, our result generalizes the upper estimate in Theorem \ref{Bsw}.

The case of $ q_j $ subject to the smallness condition of Theorem \ref{Bsw} can be obtained from this by standard methods of abstract perturbation theory.

\section{Applications}

\subsection{Smooth classes} A corollary of Theorem 1 is obtained when the conditions (i)--(iv) are satisfied with $ a_j $ independent of $ j $. In this case condition (ii) reduces to $ a_j^2 = O \( R^{d-1} \) $ and the lhs in the other three conditions is monotone decreasing in $ a_j $, hence without loss of generality one can assume that all four conditions are satisfied with $ a_j^2 = R^{ d-1 } $. Introduce the following notation. Given a finite rank Hamiltonian, $ \cG $, with parameters $ \{ e_j \} $, let $ \operatorname{Var} \cG \colon = \sum \len P_j - P_{ j+1 } \rin $, $ P_j = \langle \cdot , e_j \rangle e_j $.

\begin{corollary}\la{var}Assume $ 1/2 \le d < 1 $. Let for any $ \von > 0 $ a finite rank Hamiltonian $ \cH^\von $ defined on $ ( 0 , L ) $ exist such that

(a) \[ \len \cH - \cH^\von \rin_{ L^1 ( 0 , L ) } \le \von ,\]

(b) \[ \operatorname{Var} \cH^\von \le C \von^{ \frac{ 2d -1 }{ 2d - 2 } } . \]
Then the order of the system $ ( \cH , L ) $ is not greater than $ d $. 
\end{corollary}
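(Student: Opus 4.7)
The plan is to apply Theorem 1 with $\cH_R := \cH^{\von(R)}$ taken from the hypothesis (the parameters $x_j,e_j$ being the jump points and the associated unit vectors of $\cH^{\von(R)}$), and with a uniform choice of weights $a_j=a(R)$ independent of $j$. With $a_j\equiv a$, condition (ii) reads $a^2 L\le C R^{d-1}$, so the largest admissible value is $a^2\asymp R^{d-1}$; since the left-hand sides of (i), (iii), (iv) are all monotone decreasing in $a$ for fixed $\cH_R$, I would take exactly $a^2 = R^{d-1}$, which lies in $(0,1]$ for $R$ large because $d<1$.

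The next step is to fix $\von(R)$ so as to match the remaining conditions. With $a^2=R^{d-1}$, condition (i) reduces to $\|\cH-\cH_R\|_{L^1(0,L)}\le C R^{d-1}a^2 = C R^{2(d-1)}$, which by (a) is guaranteed once we set $\von(R)=C' R^{2(d-1)}$; this fixes $\von(R)$. Using the elementary inequality $\log(1+x)\le x$ for $x\ge 0$, condition (iii) becomes $\operatorname{Var} \cH_R \le C R^d a^2 = C R^{2d-1}$. The key algebraic check, and precisely the content of the exponent in hypothesis (b), is that with $\von = R^{2(d-1)}$ one has
\[
\von^{\frac{2d-1}{2d-2}} \;=\; R^{\,2(d-1)\cdot \frac{2d-1}{2d-2}} \;=\; R^{\,2d-1},
\]
so (b) supplies exactly the required variation bound.

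Condition (iv) is immediate: for a constant sequence $a_j$ the telescoping sum $\sum|\log(a_j/a_{j-1})|$ vanishes, and the two boundary terms contribute $2\log a^{-1}=(1-d)\log R$, which is $O(R^d)$ for any $d>0$. Once all four conditions are verified, Theorem 1 yields $\|M(z)\|\le e^{K|z|^d}$, and hence the order of $(\cH,L)$ does not exceed $d$.

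I do not foresee any substantive obstacle: the corollary is, in essence, bookkeeping around the fact that once $a$ is forced by (ii) and $\von$ is forced by (i), condition (iii) uniquely dictates the exponent $(2d-1)/(2d-2)$ in hypothesis (b). The restriction $d\ge 1/2$ corresponds to this exponent being nonpositive, so that the required variation bound remains compatible with the approximation error $\von$ shrinking to zero; for $d<1/2$ the exponent would be positive and the hypothesis would force $\operatorname{Var}\cH^\von\to 0$, which is incompatible with nontrivial $\cH$.
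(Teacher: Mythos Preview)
Your proposal is correct and follows essentially the same route as the paper: set $\cH_R=\cH^{\von(R)}$ with $\von(R)=R^{2(d-1)}$ and $a_j\equiv R^{(d-1)/2}$, then (a) gives (i), (b) gives (iii) via $\log(1+x)\le x$, and (ii), (iv) are immediate. The paper's proof is just the terse version of exactly this computation.
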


\begin{proof} System $ ( \cH , L ) $ obeys the condition of Theorem 1 with $ \cH_R = \cH^{ \von ( R ) } $, $ \von ( R ) = R^{ 2 ( d-1 ) } $, $ a_j = R^{ ( d-1)/2} $. Condition (a) implies (i), (b) implies (iii) via an elementary inequality, (ii) and (iv) are immediate. \end{proof}

This corollary allows to give an upper estimate for order of Hamiltonians in classical smoothness classes. We give two examples. 

\begin{corollary}\la{holder} Let $ ( \cH , L ) $ be a canonical system with $ \operatorname{rank } \cH ( x ) = 1 $ a. e.  

1. If $ \cH \in C^\alpha [ 0 , L ] $, $ 0 < \alpha \le 1 $, then the order of the system is not greater than $ 1 - \alpha/2 $.

2. If $ \cH $ has bounded variation then the order is not greater than $ 1/2 $.
\end{corollary}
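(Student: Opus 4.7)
The plan is a direct invocation of Corollary \ref{var} with a piecewise constant approximant built on an equispaced partition. For each $n$ to be chosen, partition $[0,L]$ into equal subintervals $I_j = [jL/n,(j+1)L/n]$, $0 \le j < n$, pick left endpoints $x_j$ outside the null set on which $\operatorname{rank}\cH(x) \ne 1$, and set $\cH^\von(x) = \cH(x_j)$ for $x \in I_j$. Because $\operatorname{tr}\cH = 1$ and $\operatorname{rank}\cH(x_j) = 1$, each $\cH(x_j)$ is a unit-norm rank-one projection, so $\cH^\von$ qualifies as a finite rank Hamiltonian in the sense of Definition 1. It then remains to select $n = n(\von)$ and verify (a)--(b) of Corollary \ref{var} with the claimed $d$.

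For part 1, the H\"older hypothesis yields $\|\cH(x) - \cH(x_j)\| \le C(L/n)^\alpha$ pointwise on $I_j$, so
\[ \|\cH - \cH^\von\|_{L^1(0,L)} \le C n^{-\alpha}, \qquad \operatorname{Var}\cH^\von \le \sum_j \|\cH(x_{j+1}) - \cH(x_j)\| \le C n^{1-\alpha}. \]
Setting $\von \asymp n^{-\alpha}$ gives $\operatorname{Var}\cH^\von \le C\von^{-(1-\alpha)/\alpha}$. The choice $d = 1 - \alpha/2 \in [1/2,1)$ makes the exponent required in (b), namely $(2d-1)/(2d-2) = (1-\alpha)/(-\alpha) = -(1-\alpha)/\alpha$, match identically, and Corollary \ref{var} delivers order $\le 1-\alpha/2$.

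For part 2, the same construction with $V := \operatorname{Var}\cH < \infty$ gives $\|\cH - \cH^\von\|_{L^1(0,L)} \le (L/n)V$, which is taken as $\von$, while $\operatorname{Var}\cH^\von \le V = O(1)$ by a telescoping estimate. With $d = 1/2$ the exponent $(2d-1)/(2d-2)$ in (b) vanishes, so (b) collapses to the uniform bound $\operatorname{Var}\cH^\von \le C$, which is automatic. Applying Corollary \ref{var} yields order $\le 1/2$.

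No genuine analytic difficulty arises here; the content of Corollary \ref{var} already subsumes the essential work. The only items to pin down are the exponent bookkeeping $d = 1-\alpha/2 \leftrightarrow -(1-\alpha)/\alpha$ in part 1 and the telescoping control on $\operatorname{Var}\cH^\von$ in part 2, together with the elementary check that the rank-one projection structure of $\cH$ is inherited by $\cH^\von$.
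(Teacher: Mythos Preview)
Your proposal is correct and matches the paper's approach exactly: the paper's proof consists of the single remark that one chooses $x_j = Lj/N$, $e_j \in \operatorname{Ran}\cH(x_j)$ and adjusts $N$, with the second assertion declared trivial. Your write-up is precisely the fleshed-out version of that sketch, including the exponent identification $(2d-1)/(2d-2) = -(1-\alpha)/\alpha$ at $d = 1-\alpha/2$ and the telescoping bound $\operatorname{Var}\cH^\von \le V$ in the bounded-variation case.
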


\begin{proof} The first assertion follows from choosing $ x_j = Lj/N $, $ e_j \in \Ran \cH ( x_j ) $, for the parameters of the approximating Hamiltonian $ \cH^\von $ and adjusting $ N $. The second assertion is trivial. \end{proof} 

Notice that while the estimate of Theorem 1 is sharp, the Hamiltonian in the corresponding example is discontinuous. It is an open question whether the first assertion of Corollary \ref{holder} is sharp. The second assertion of the corollary admits an "elementary" proof based on a trick from \cite[Theorem 3.6]{Teschl}. 

\subsection{Berg--Valent matrix.}\la{BVma} In this subsection we consider the order $ 1/4 $ Jacobi matrix of Berg-Valent \cite{BergValent} as a warmup for the proof of the Valent conjecture. We do not need the explicit formulae for the parameters $ q_n $ and $ \rho_n $. The necessary information about them from \cite{BergValent} is as follows

$ 1^\circ $. $ \rho_n \sim n^4  $ as $ n \to \infty $.

$ 2^\circ $. The values of the corresponding orthogonal polynomials, $ P_n ( z ) $ and $ Q_n ( z ) $, at $ z= 0 $ have asymptotics $ P_n ( 0 ) \sim c_1 n^{ -1 } $, $ Q_n ( 0 ) \sim c_2 n^{ -1 } $ with $ c_{ 1, 2 } \ne 0 $ \cite[(2.33), (3.2)]{BergValent}.

By Theorem \ref{Ka} we find that $ \delta_j \sim C j^{ -2 } $, $ \sin ( \varphi_j - \varphi_{ j+1 } ) = O \( j^{ -2 } \) $. Let $ \cH_R = \cH $ on $ ( 0 , b_{N-1} ) $ with $ N \sim R^{ 1 - d } $, $ 1/4 \le d \le 1/2 $, and define $ \cH_R $ arbitrarily on $( b_{N-1} , L ) $ so that $ \cH_R $ becomes a finite rank Hamiltonian on $ (0, L) $. Let $ a_{ N-1 } = 1 $. Then 
\bequnan \textrm{lhs of (i)} & \le & 2 ( L - b_{ N-1 } ) = 2 \sum_{ j \ge N } \delta_j = O \( R^{ d - 1 } \) , \\
\textrm{lhs of (ii)} & = & \sum_{ j=0}^{ N-2 } \frac{a_j^2}{ j^2 } + O \( R^{ d-1 } \)  , \\
\textrm{lhs of (iii)} & \le & \sum \log \( 1  +  \frac 1{ j^2 a_j a_{ j+1 }} \) + O ( 1 ). 
\eequnan 
Define $ a_j^2 = R^{ d-1 } $ for $ j \le R^d $, $ a_j^2 = R^{ 2d-1 } $ for $ R^d < j \le N-2 $. Then
$ \sum_0^{ N-2 } a_j^2 j^{-2 } = O \( R^{ d- 1 } \) $, the lhs in (iv) is $ O ( \log R ) $, and
\bequnan \sum \log \( 1  +  \frac 1{ j^2 a_j a_{ j+1 }} \) \le C R^d \log R + R^{ 1-2d } \sum_{ j > R^d } \frac 1{j^2} = O \( R^d \log R \) + O \( R^{ 1-3d } \) = \\ O \( R^d\log R \) \eequnan
because $ d \ge 1/4 $. 
Applying Theorem 1 we conclude that the order is not greater than $ 1/4 $, the actual order of the system found in \cite{BergValent}.

\subsection{Valent's conjecture} The following assertion generalizes the consideration of the previous example.

\begin{proposition}\la{1M}
Assume that a Jacobi matrix (\ref{Jac}) is such that $ P_n^2 ( 0 ) + Q_n^2 ( 0 ) \sim C n^{ \Delta - D } $, $ \rho_n \sim n^D $ as $ n \to \infty $ with numbers $ \Delta , D $ satisfying $ 1 < \Delta < D-1 $. Then the order is not greater than $ 1/D $.  
\end{proposition}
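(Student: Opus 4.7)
The plan is to follow the template of the Berg--Valent warmup in Section \ref{BVma}, adjusting the two-tier construction of $a_j$ to the general exponents $\Delta,D$. First one translates the hypotheses to the Hamiltonian side via Theorem \ref{Ka}: $\delta_j = P_j(0)^2 + Q_j(0)^2 \sim C j^{\Delta-D}$ by part (i), and combining $\rho_j \sim j^D$ with part (ii) yields $|\sin(\varphi_j-\varphi_{j+1})| \asymp j^{-\Delta}$. Since for unit vectors in $\R^2$ the corresponding rank-one projections satisfy $\|P_j - P_{j+1}\| = |\sin(\varphi_j - \varphi_{j+1})|$, one gets $\|P_j - P_{j+1}\| \asymp j^{-\Delta}$. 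The hypothesis $\Delta < D-1$ guarantees $\sum_{j \ge N} \delta_j = O(N^{\Delta - D + 1})$, which will drive the tail bound in condition (i).

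Fix $d$ slightly larger than $1/D$ (covering e.g. $d \in (1/D, 2/D)$ is enough). Choose $N = N(R) \sim R^{\beta}$ with $\beta = (1-d)/(D-\Delta-1)$ so that $L - b_{N-1} = O(R^{d-1})$; set $\cH_R = \cH$ on $(0,b_{N-1})$ and extend $\cH_R$ as a constant rank-one orthogonal projection on $(b_{N-1}, L)$. Using $\Delta > 0$ one verifies $\beta \ge d$, so the range $R^d < j \le N-2$ is nonempty, and I would define the weights in two tiers,
\[
a_j^2 = R^{d-1} \text{ for } j \le R^d, \qquad a_j^2 = R^{d(D-\Delta)-1} \text{ for } R^d < j \le N-2,
\]
with $a_{N-1} = 1$ for the single interval where $\cH_R$ differs from $\cH$. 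Both exponents are negative (from $d<1$ and $d(D-\Delta) < 1$), so $a_j \in (0,1]$. The tier-2 value of $a_j^2$ is precisely the one making conditions (ii) and (iii) of Theorem~1 saturate simultaneously at $R^{d-1}$ and $R^d$.

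What remains is the verification of (i)--(iv). Conditions (i), (ii), (iv) are routine: (i) reduces to the tail estimate above; (ii) decomposes into tier-1 contribution $R^{d-1}\sum_{j\le R^d} j^{\Delta-D} = O(R^{d-1})$, tier-2 contribution $R^{d(D-\Delta)-1}\sum_{j > R^d} j^{\Delta-D} \asymp R^{d-1}$, and a tail of the same order; and (iv) is $O(\log R)$ because only $O(1)$ tier transitions occur. The main obstacle is (iii). In tier 1 there are $R^d$ terms each bounded by $\log(1 + R^{1-d}) = O(\log R)$, contributing $O(R^d \log R)$. In tier 2 the key observation is that $\|P_j - P_{j+1}\|/(a_j a_{j+1}) \lesssim R^{1 - dD}$ for $j \ge R^d$, which is $\le 1$ exactly when $d \ge 1/D$; this is where the chosen range of $d$ is used. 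The linearization $\log(1+x) \le x$ then reduces the sum to $R^{1 - d(D-\Delta)} \sum_{j > R^d} j^{-\Delta} \asymp R^{1 - d(D-1)} \le R^d$, again by $d \ge 1/D$. Absorbing the logarithmic factor by slightly enlarging $d$, Theorem~1 yields $\|M(z)\| \le e^{K|z|^d}$ for every $d > 1/D$, so the order is at most $1/D$ as claimed. The delicate balance in tier 2 --- the exponent $d(D-\Delta)-1$ of $a_j^2$ and the transition point $j \sim R^d$, both forced by reconciling (ii) and (iii) at the critical exponent $d = 1/D$ --- is the crux of the argument.
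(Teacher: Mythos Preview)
Your proof is correct and follows essentially the same route as the paper: the same truncation $\cH_R=\cH$ on $(0,b_{N-1})$ with $N\sim R^{(1-d)/(D-\Delta-1)}$, the same two-tier choice $a_j^2=R^{d-1}$ for $j\le R^d$ and $a_j^2=R^{d(D-\Delta)-1}$ for $R^d<j\le N-2$ (the paper writes this exponent as $d-1+d(D-\Delta-1)$), and the same linearization $\log(1+x)\le x$ in tier~2 to reduce (iii) to $R^{1-d(D-1)}\le R^d$. Your explicit checks that $a_j\le 1$ and that the tier-2 range is nonempty (both requiring $d<1/(D-\Delta)$, which holds for $d$ close enough to $1/D$ since $\Delta>0$) are details the paper leaves implicit; the parenthetical range $(1/D,2/D)$ is slightly too generous in general, but only $d\downarrow 1/D$ is needed.
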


Let $ \lambda_n , \mu_n $, $ n\ge 0 $, be sequences of reals, $ \lambda_n > 0 $ for $ n \ge 0 $, $ \mu_n  > 0 $ for $ n \ge 1 $, $ \mu_0 = 0 $. Define 
\be\la{qrho}  q_{ n+1 } = \lambda_n + \mu_n , \; \rho_{ n+1} = \sqrt{ \lambda_n \mu_{ n+1 } } . \ee
The Jacobi matrix with parameters $ q_j $, $ \rho_j $ is said to be corresponding to birth-death processes with rates $ \lambda_n $ and $ \mu_n $ \cite{BergValent}.  

\begin{corollary}\la{Valenthyp}
The order of the Jacobi matrix corresponding to birth-death processes with polynomial rates $ \lambda_n = ( n + B_1 ) \cdots ( n+ B_\ell ) $, $ \mu_n = ( n + A_1 ) \cdots ( n+ A_\ell ) $ subject to the condition $ 1 < \sum ( B_j - A_j ) < \ell - 1 $, is $ 1/ \ell$.
\end{corollary}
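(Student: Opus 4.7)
The plan is to derive the corollary from Proposition \ref{1M}. The latter supplies the upper bound order $\le 1/\ell$, and the matching lower bound $\ge 1/\ell$ comes from the Liv\v sic theorem discussed in the Introduction.

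First, from $\rho_{n+1} = \sqrt{\lambda_n\mu_{n+1}}$ and the fact that $\lambda_n,\mu_n$ are monic polynomials of degree $\ell$, one reads off $\rho_n \sim n^\ell$, identifying $D=\ell$. Setting $\Delta := \sum_{j=1}^\ell(B_j - A_j)$, the hypothesis $1 < \Delta < \ell - 1$ coincides with the condition $1 < \Delta < D-1$ of Proposition \ref{1M}.

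The main step is to establish $P_n^2(0) + Q_n^2(0) \sim C n^{\Delta - \ell}$. Here I exploit the birth--death structure. Define the renormalized polynomials $R_n(z)$ by $R_{-1}=0$, $R_0=1$ and
\[ -zR_n(z) = -(\lambda_n+\mu_n)R_n(z) + \lambda_n R_{n+1}(z) + \mu_n R_{n-1}(z), \]
so that $R_n(0) \equiv 1$. A direct verification shows $P_{n+1}(z) = \sqrt{\pi_n}\,R_n(z)$ with the Karlin--McGregor weights $\pi_n = (\lambda_0\cdots\lambda_{n-1})/(\mu_1\cdots\mu_n)$, whence
\[ P_n(0)^2 = \pi_{n-1} = \prod_{j=1}^\ell \frac{\Gamma(n+B_j)\,\Gamma(A_j+1)}{\Gamma(n+A_j+1)\,\Gamma(B_j)} \sim C\,n^{\Delta - \ell} \]
by Stirling. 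Since $u_n := P_n(0)$ is a nowhere-vanishing solution of the three-term recurrence at $z=0$, the general theory furnishes the second linearly independent solution $v_n = u_n\sum_{k=1}^{n-1} 1/(\rho_{k+1}u_ku_{k+1})$, with $v_1 = 0$. The summand is of order $k^{-\Delta}$, so by $\Delta > 1$ the series converges to a nonzero limit $L$, giving $v_n \sim L\,u_n$. The sequence $Q_n(0)$ is the solution of the same recurrence selected by $Q_1 = 0,\, Q_2 = 1/\rho_1$, hence $Q_n(0) = \beta v_n$ with $\beta = 1/(\rho_1 v_2) \ne 0$, and therefore $Q_n(0) \asymp \sqrt{\pi_{n-1}}$. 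Combining, $P_n^2(0) + Q_n^2(0) \asymp n^{\Delta-\ell}$.

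With both asymptotics in hand, Proposition \ref{1M} yields the upper bound $\le 1/\ell$. The lower bound follows from Liv\v sic's theorem: the $2n$-th moment $\gamma_{2n}$ of the spectral measure grows like $\prod_{k\le n}\rho_k^2 \asymp (n!)^{2\ell}$, so $\log\gamma_{2n}\sim 2\ell n\log n$, giving order $\ge \limsup (2n\log n)/\log\gamma_{2n} = 1/\ell$. The main obstacle will be the asymptotic analysis of $Q_n(0)$: one must ensure that the companion solution $v_n$ really has the same order of growth as $u_n$ (which uses $\Delta > 1$ critically, matching the indeterminacy criterion of Karlin--McGregor) and that the coefficient $\beta$ coming from the initial conditions $Q_1=0,\,Q_2=1/\rho_1$ does not accidentally cancel the dominant term, a short but concrete check on the first two values of the recurrence.
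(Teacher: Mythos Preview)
Your proposal is correct and, for the upper bound, follows the paper exactly: you verify the hypotheses of Proposition~\ref{1M}. The paper simply cites \cite{BergValent} for the asymptotics of $P_n(0)$ and $Q_n(0)$, whereas you spell them out via the Karlin--McGregor weights $\pi_n$ and reduction of order; this is precisely what those cited formulae encode. Two cosmetic remarks: with the paper's sign conventions the relation is $P_{n+1}(z)=(-1)^n\sqrt{\pi_n}\,R_n(z)$ rather than $+\sqrt{\pi_n}\,R_n(z)$, but since $R_n(0)=1$ this does not affect $P_n(0)^2=\pi_{n-1}$; and the Wronskian sum should carry $\rho_k$ rather than $\rho_{k+1}$, again immaterial for the asymptotic. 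Note also that your terms $1/(\rho_k u_k u_{k+1})$ are all of one sign (since $P_k(0)P_{k+1}(0)<0$), so the limit $L$ is indeed nonzero.

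For the lower bound you take a different route from the paper. The paper argues directly (following \cite{BergSzwarc}): from the Christoffel--Darboux bound $\sum_j|P_j(z)|^2\le e^{K|z|^p}$ and reality of the zeros one gets $1/(\rho_1\cdots\rho_j)\le (j/C)^{-j/p}$, and $\rho_j\sim j^\ell$ forces $p\ge 1/\ell$. You instead invoke Liv\v sic's moment theorem. That is perfectly legitimate, but the one step you assert without proof --- ``$\gamma_{2n}$ grows like $\prod_{k\le n}\rho_k^2$'' --- needs a word. What Liv\v sic actually requires is an \emph{upper} bound on $\log\gamma_{2n}$; this is immediate once you note that $J^n e_1$ is supported on the first $n{+}1$ coordinates, so $\gamma_{2n}=\|J^n e_1\|^2\le (\max_{j\le n+1}(|q_j|+2\rho_j))^{2n}\le (Cn^\ell)^{2n}$, whence $\log\gamma_{2n}\le 2\ell n\log n+O(n)$ and $\limsup(2n\log n)/\log\gamma_{2n}\ge 1/\ell$. (The relation $\gamma_{2n}\asymp\prod\rho_k^2$ is not literally true --- only on the logarithmic scale --- but that is all you use.)
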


Let us establish the proposition first. 

\begin{proof}
By Theorem \ref{Ka} the assumption of the proposition implies that 
\[ \delta_j \sim C j^{ \Delta - D } , \; \sin \( \varphi_j - \varphi_{ j+1 } \) = O \( j^{ - \Delta } \) . \]
Fix a $ d > D^{ -1 } $ small enough and define $ \cH_R $ as in Section \ref{BVma}. The value of $ N $ is to be chosen so that $ \sum_{ j \ge N } \delta_j \asymp N^{ \Delta - D + 1 } = O \( R^{ d-1 } \) $, thus let $ N \sim R^{ \frac { d-1 }{ \Delta - D + 1 }} $. Define
\[ a_j^2 = \begin{cases} R^{ d-1 } , & j \le R^d ,\cr R^{ d-1 + d ( D - \Delta - 1 ) } , & R^d < j < N-1 \cr 1 , & j = N-1 \end{cases} . \]
Notice that $ R^d \ll N-1 $, so the corresponding range of $ j $'s is non-empty. With this choice, (i) of Theorem 1 is satisfied by the choice of $ N $, (ii) is satisfied because $ \sum_{ R^d < j < N-1 }  \delta_j = O \( R^{ d ( \Delta - D + 1 ) } \) $ and $ a_j $ for this range are chosen precisely to make the corresponding term $ O  \( R^{ d-1 } \) $, and the lhs in (iv) is $ O \( \log R \) $. The lhs in (iii) is estimated above by  
\bequnan \sum \log \( 1  +  \frac 1{ j^\Delta a_j a_{ j+1 }} \) \le C R^d \log R + \frac 1{R^{ d-1 + d ( D - \Delta - 1 ) }} \sum_{ j > R^d } \frac 1{j^\Delta } = O \( R^{ d+\von } \) + \\ O \( R^{ 1 - d D + d  } \)  . \eequnan
The rhs is $ O \( R^{ d+\von } \) $ for $ d > D^{ -1 } $ and any $ \von > 0 $, and the assertion of the proposition follows by Theorem 1.
\end{proof}

\medskip

\textit{Proof of Corollary \ref{Valenthyp}}. The fact that the order does not exceed $ 1/\ell $ follows from Proposition \ref{1M} by inspection of (\ref{qrho}) and explicit formulae \cite{BergValent} expressing $ P_j ( 0 ) $, $ Q_j ( 0 ) $ via $ \lambda_j $'s and $ \mu_j $'s. On the other hand, an application of \cite[Propositions 7.1]{BergSzwarc} shows that the order is not less than $ 1/\ell $. For completeness, we provide a proof of the latter fact. First, for any $ p $ greater than the order of the system there exists a $ K > 0 $ such that for all $ z $ large enough
\[ \sum \left| P_j ( z ) \right|^2 \le e^{ K \left| z \right|^p } . \]
This is an easy corollary of the Kristoffel-Darboux formula. Since all zeroes of $ P_j $'s are real, $ | P_j ( i\tau ) | \ge \pi_j \tau^j $ for $ \tau > 0$, $ \pi_j = 1 / \( \rho_1 \cdots \rho_j \) $ being the leading coefficient of the polynomial $ P_j $, therefore 
\[ \frac 1{ \rho_j \cdots \rho_1 } \le \( \frac jC \)^{ - \frac jp} . \]
Under the assumptions of the corollary, $ \rho_j \sim j^\ell $, which implies $ p \ge 1/\ell $. 
\hfill $ \Box $  

The assertion of Corollary \ref{Valenthyp} was conjectured in \cite{Valent} on the basis of two explicitly solvable examples, the one dealt with in the previous subsection and another one \cite{GLV} with $ \ell = 3 $.

\section{Proof of Theorem \ref{selfsim}} 

The structure of the proof is as follows. First we are going to show that the order of the system is not greater than the infimum. This will be done by an application of Theorem 1 to a natural approximation $ \cH_R $. Then we will show that the order is not less than the infimum by an appropriate choice of the covering.

\textit{The order $ \le $ the infimum.} Let $ d $ be such that for some $ C > 0 $ for each $ R $ large enough there exists a covering of the interval $ ( 0 , L ) $ by $ n ( R ) \le C R^d / \log R $ intervals, to be denoted $ \omega_j  $, such that (A) is satisfied. The stated inequality will be established if we show that the order is not greater than $ d $. Without loss of generality on can assume that the intervals $ \omega_j $ are mutually disjoint. Define 
\[ \cH_R (x ) =  \begin{cases}
\frak H_1 , & x \in \omega_j,  | \omega_j \cap X_1 | \ge \frac {|\omega_j |}2 , \cr
 \frak H_2 & \mathrm{otherwise} . \end{cases} \]
With this choice of $ \cH_R $
\[ \int_{ \omega_j } \len \cH ( t ) - \cH_R ( t ) \rin \diff t \le 2 \min \{ | \omega_j \cap X_1 | , | \omega_j \cap X_2 | \} , \] 
hence the condition (i) of Theorem 1 takes the form
\[ \sum\frac 1{a_j^2} \min \{ | \omega_j \cap X_1 | , | \omega_j \cap X_2 | \} \le C R^{ d-1 } . \]
Notice that $ \min \{ | \omega_j \cap X_1 | , | \omega_j \cap X_2 | \} \asymp | \omega_j \cap X_1 | \, | \omega_j \cap X_2 | / | \omega_j | $, hence the latter condition is equivalent to
\be\la{i} \sum\frac 1{a_j^2 | \omega_j | } | \omega_j \cap X_1 |\,  | \omega_j \cap X_2 | \le C R^{ d-1 } . \ee 
Conditions (iii) and (iv) of Theorem 1 in the situation under consideration are satisfied if 
\be\la{iii} \sum \log \( 1 + a_j^{ -1 } \) \le C R^d , \ee
and condition (ii) has the form
\be\la{ii} \sum a_j^2 | \omega_j | \le C R^{ d -1 } . \ee

Let $ a_j = 1 $ for $ | \omega_j | \le 2/R $. We write $ \frak N = \{ j \colon | \omega_j | \le 2/R \} $. The parts of sums in (\ref{i}), (\ref{iii}) and (\ref{ii}) over $ j \in \frak N $ are then estimated above by $ n ( R ) R^{ -1 } $, $ n ( R ) R^{ -1 } $ and $ n( R ) $, respectively, hence they are $ O \( R^{ d-1 } \) $ by condition (B). 
For $ | \omega_j | > 2/R $ we optimize the choice of $ a_j $ over the summands in (\ref{i}), (\ref{iii}) and (\ref{ii}) by taking 
\[ a_j^2 = \max \left\{ \frac 1{ R | \omega_j | },  \frac{ \sqrt{ | \omega_j \cap X_1 | | \omega_j \cap X_2 | }}{ | \omega_j | } \right\}  . \]

With this choice the sums over $ j \notin \frak N $ in (\ref{i}), (\ref{ii}) and (\ref{iii}) are estimated above by \[ \sum \sqrt{ | \omega_j \cap X_1 | \, | \omega_j \cap X_2 | } ,\]  
\[ R^{ -1 } n( R ) + \sum \sqrt{ | \omega_j \cap X_1 | \, | \omega_j \cap X_2 | } , \]
and 
\[ \sum \log ( R | \omega_j | ) \le n ( R ) \log R , \]
respectively. Plugging here (A) and (B) of Theorem 2 we obtain that all the assumptions of Theorem 1 are satisfied. 

\textit{The infimum $ \le $ the order.} Given a $ \tau > 0 $, $ x \in ( 0 , L ) $, let $ s ( \tau , x ) \in [ 0 , x ] $ be the solution of the equation $ \tau^2 | ( s, x ) \cap X_1 | \, | ( s, x ) \cap X_2 | = 1 $. This solution is unique whenever exists. Without loss of generality one can assume that, say, for some $ a > 0 $ the function $ \cH ( x ) = \frak H_1 $ for $ x \in ( 0 , a/2  ) $, $ \cH ( x ) = \frak H_2 $ for $ x \in ( a/2  , a) $ (attaching such two intervals at the left end does not change the order). Then $ s ( \tau , x ) $ is defined for $ \tau $ large enough for all $ x \ge a $.

\begin{lemma}\la{Katzform}\cite[lemmas 1--3]{Katz1} The order of the system $ ( \cH , L ) $ equals to
\be\la{ordKa} \limsup_{ \tau \to +\infty } \frac{ \displaystyle{\int_a^L} \frac{\displaystyle{\chi_2 ( x ) }}{ \displaystyle{| ( s ( \tau , x ) , x )\cap X_2 | }} \diff x }{ \log \tau }  , \ee
$ \chi_2 $ being the indicator function of the set $ X_2 $.
\end{lemma}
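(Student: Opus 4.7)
My plan is to reduce the order to the convergence exponent of an associated zero set, and then interpret the integral in (\ref{ordKa}) as counting the nodes of a solution.

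First I would recall that, for the type of systems under consideration, each matrix element of $ M ( L , \cdot ) $ is entire of finite (in fact, $<1$) order with only real zeros. Hadamard factorisation then implies that the order of the system coincides with the convergence exponent of the zero set of, say, $ M_{21} ( L , \cdot ) $; equivalently, it equals $ \limsup_{ \tau \to \infty } \log n ( \tau ) / \log \tau $, where $ n ( \tau ) $ counts the zeros of $ M_{21} ( L , \cdot ) $ inside $ \{ | z | \le \tau \} $. The factor $ \log \tau $ in (\ref{ordKa}) suggests that the numerator should be shown, up to bounded errors, to equal $ n ( \tau ) $.

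Next, since $ \cH ( x ) \in \{ \frak H_1 , \frak H_2 \} $ a.e., the canonical system is unitarily equivalent to a Krein string \cite{KWW}, so the zeros of $ M_{21} ( L , \cdot ) $ are the Dirichlet eigenvalues of that string. A Sturm-type oscillation argument identifies $ n ( \tau ) $ up to $ O ( 1 ) $ with the number of sign changes of the first column of $ M ( \cdot , \tau ) $ on $ ( 0 , L ) $. The assumption inserted at the left end (that $ \cH $ equals $ \frak H_1 $ then $ \frak H_2 $ on $ ( 0 , a ) $) is exactly what guarantees that this solution already starts oscillating by the time $ x = a $, which is why the integral runs from $ a $ rather than from $ 0 $.

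The heart of the argument is a geometric computation of the node count. Using that $ Y_1 $ is constant on every component of $ X_1 $ and $ Y_2 $ is constant on every component of $ X_2 $, direct integration of (\ref{can}) backward from $ x \in X_2 $ shows that the solution completes one full oscillation on $ ( s , x ) $ precisely when $ \tau^2 | ( s , x ) \cap X_1 | \cdot | ( s , x ) \cap X_2 | $ first reaches unity, i.e.\ when $ s = s ( \tau , x ) $. Translating this local ``one oscillation per cell'' identity into an integral by moving $ x $ along $ X_2 \cap ( a , L ) $ and weighting each node by its effective $ x $-width $ | ( s ( \tau , x ) , x ) \cap X_2 |$ yields
\[ n ( \tau ) \asymp \int_a^L \frac{ \chi_2 ( x ) }{ | ( s ( \tau , x ) , x ) \cap X_2 | } \diff x , \]
which upon taking $ \log / \log \tau $ gives (\ref{ordKa}).

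The principal obstacle is the quantitative justification of this ``one oscillation per cell'' heuristic, uniformly in $ \tau $ and in the detailed geometry of $ X_1 , X_2 $. Concretely, one must show that the scaling $ \tau^2 | ( s , x ) \cap X_1 | \cdot | ( s , x ) \cap X_2 | \asymp 1 $ determines the oscillation period of the actual solution up to bounded multiplicative errors, and that nodes corresponding to different base points $ x $ are not multiply counted. This is what is carried out in Lemmas 1--3 of \cite{Katz1}, by a piecewise analysis of the monodromy matrix on blocks of the form $ ( s ( \tau , x ) , x ) $ combined with a dyadic covering argument.
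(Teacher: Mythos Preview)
Your route via real-axis zero counting and Sturm oscillation is genuinely different from what the paper does, and it is worth comparing the two.

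The paper only proves the inequality it actually needs later, namely that the order is \emph{not less than} the quantity (\ref{ordKa}); the full equality is simply cited from \cite{Katz1}. For this lower bound the paper avoids oscillation theory altogether: it works on the \emph{imaginary} axis, where $\xi_\tau(x):=M_{11}(i\tau,x)$ is positive and nondecreasing, and derives a pointwise lower bound $\xi_\tau'(x)/\xi_\tau(x)\ge \tfrac12\,\rho(s(\tau,x),x)^{-1}$ for a.e.\ $x\in X_2\cap(a,L)$ directly from the integral equation. Integrating this inequality gives $\log|M_{11}(i\tau)|\gtrsim\int_a^L \chi_2/\rho(s(\tau,\cdot),\cdot)$, and since $M_{11}$ has only real zeros its order is read off from $\log\log|M_{11}(i\tau)|/\log\tau$. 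No node counting, no covering, no Hadamard factorisation.

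Your plan, by contrast, stays on the real axis and tries to identify the integral with the zero-counting function $n(\tau)$. The weak point is exactly the one you flag: the assertion that the solution ``completes one full oscillation on $(s,x)$ precisely when $\tau^2|(s,x)\cap X_1|\cdot|(s,x)\cap X_2|=1$'' is not literally true --- the actual rotation angle depends on how $X_1$ and $X_2$ are interleaved inside $(s,x)$, not only on the two measures --- so the passage from the heuristic to $n(\tau)\asymp\int_a^L\chi_2/\rho(s(\tau,\cdot),\cdot)$ is the entire content, and you defer it to \cite{Katz1}. That is acceptable as a citation, but it means your write-up is a sketch rather than a proof; in particular it does not supply the self-contained argument for the direction the paper uses. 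If you want an independent proof of that direction, the imaginary-axis monotonicity argument is both shorter and more robust than trying to make the oscillation count rigorous.
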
 

This assertion provides a crucial step in the proof of the Kats' formula for the order. It is formulated in \cite{Katz1} in terms of the corresponding strings. For completeness we reproduce here the proof of the part of the lemma that we are going to use -- the order is not less than the quantity (\ref{ordKa}), translated to the language of canonical systems. 

\begin{proof} By definition the order of a system is the order of any of the matrix elements of the monodromy matrix. Let us show that the order of the matrix element $ M_{ 11 } ( z ) $ is estimated from below by the rhs in (\ref{ordKa}). The order of $ M_{ 11 } $ coincides with 
\[ \limsup_{ \tau \to + \infty } \frac{\log \log | M_{ 11 } ( i \tau ) |}{ \log \tau } \] 
because $ M_{ 11 } $ is a real entire function having all its zeroes real.

Let $ \chi_1 = 1 - \chi_2 $, $ \rho ( s , t ) = | ( s , t ) \cap X_2 | $. On rewriting the first column of (\ref{can}) as an integral equation we obtain,
\[ M_{11} ( z, x ) = 1 + z \int_0^x \chi_2 ( t ) M_{ 21} ( z , t ) \diff t = 1 - z^2 \int_0^x \chi_1 ( t ) \rho ( t , x ) M_{11} ( z , t ) \diff t . \]  
When $ z = i \tau $, $ \tau > 0 $, this becomes ($ \xi_\tau ( x ) \colon = M_{ 11 } ( i \tau , x ) $),
\[  \xi_\tau ( x )  = 1 + \tau^2 \int_0^x \chi_1 ( t ) \rho ( t , x ) \xi_\tau (t) \diff t . \]
It shows that $ \xi_\tau $ is a positive and monotone nondecreasing function. Let us estimate $ \xi_\tau^\prime ( x ) / \xi_\tau ( x ) $ from below. For a. e. $ x \in X_2 $ and all $ s \le x $, we have
\bequnan \xi_\tau ( s ) = \xi_\tau ( x ) - \tau^2 \rho ( s , x ) \int_0^s \chi_1 ( t ) \xi_\tau \diff t -  \tau^2 \ \int_s^x  \rho ( t , x ) \chi_1 ( t ) \xi_\tau ( t ) \diff t \\ \ge  \xi_\tau ( x ) - \tau^2 \rho ( s , x ) \int_0^s \chi_1  \xi_\tau \diff t -  \tau^2 \rho ( s , x ) \int_s^x  \chi_1 \xi_\tau \diff t = \xi_\tau ( x ) - \rho ( s , x ) \xi_\tau^\prime ( x ) ,\eequnan
and thus
\bequnan  \xi_\tau^\prime ( x ) = \tau^2 \( \int_s^x +  \int_0^s \) \chi_1 \xi_\tau \diff t\ge \tau^2 \int_s^x \chi_1 \xi_\tau \diff t \ge \tau^2 \left| ( s , x ) \cap X_1 \right| \, \xi_\tau ( s ) \\ \ge \tau^2  \left| ( s , x ) \cap X_1 \right| \( \xi_\tau ( s ) - \rho ( s , x ) \xi_\tau^\prime ( x ) \)  . 
\eequnan
Picking $ s = s ( \tau , x ) $ we obtain that for a. e. $ x \in X_2 \cap ( a ,  L )  $
\[ \frac{\xi_\tau^\prime ( x ) }{\xi_\tau ( x )}\ge \frac 1{2 \rho ( s ( \tau , x )  , x ) } , \] 
which implies the required assertion upon integration in $ x $ over $ [ a , L ] $.
\end{proof} 

The proof of Theorem 2 will be completed if we show that for any $ d $ such that 
\be\la{condKatz} \int_a^L \frac{\chi_2 ( x ) }{ \rho ( s ( \tau , x ) , x ) } \diff x = O \( \tau^d \) , \; \tau \to + \infty ,\ee
the interval $ ( 0 , L ) $ can be covered by $ O \( R^d \log R \) $ intervals, $ \omega_j $, so that 
\be\la{sqrt} \sum \sqrt{ | \omega_j \cap X_1 | \, | \omega_j \cap X_2 | } = O \(  R^{ d-1 } \) . \ee
For each $ R $ large enough define a monotone decreasing sequence $ \{ x_j \} $, $ j \ge 1 $, as follows, $ x_1 = L $, $ x_{ j+1 } = s ( R, x_j ) $, if $ j \ge 1 $ and $ x_j \ge a $; if $ x_{ j-1 } \ge a $, $ x_j < a $ then $ x_{ j+1 } = 0 $ and the sequence terminates. Observe that the sequence $ x_j $ is finite. This follows from the definition of the function $ s ( R , x ) $, for 
\[ 1 =  R^2 | ( s ( R , x ) , x ) \cap X_1 |  \, | ( s ( R , x ), x ) \cap X_2 | \le R^2 \left| x - s ( R , x ) \right|^2 , \]
which means that $ x_j - x_{ j+1 } \ge R^{ -1 } $ so the sequence has $ O ( R ) $ members. Define $ \omega_j = [ x_{ j+1 }, x_j ] $. By construction $ [ 0 , L ] = \cup_j \omega_j $. We claim that this is the required covering. First, we have to show that $ N (R) $, the number of intervals in the covering, is $ O \( R^d \log R \) $. To this end,
notice that $  \rho ( s ( \tau , x ) , x ) \le \rho ( x_{ j+2 } , x_j ) $ for $ x \in [ x_{ j+1 } , x_j ] $ hence 
\[ \textrm{lhs in (\ref{condKatz})} \ge \sum_{ j=1}^{ N( R )-1 } \frac { s_j }{ s_j + s_{ j+1 } } , \; s_j \colon = \rho ( x_{ j+1 } , x_j ) . \] 
Let $ \frak g = \{ j \colon \frac{ s_{ j+1 }}{ s_j } \le 2 \} $, $ \hat{\frak g} = \{  j \colon \frac{ s_{ j+1 }}{ s_j } \ge 2 \} $; $ n_{ \frak g } $, $ {\hat n}_{ \frak g} $ be the respective numbers of elements. When $ j \in \frak g $ the summand in the displayed sum is bounded below, hence, $ n_{ \frak g } $ is $ O ( R^d ) $ by (\ref{condKatz}). To estimate $ {\hat n}_{ \frak g } $ notice that $ s_j \ge 1/ \( L R^2 \) $, for $ 1 =  R^2 \left| \omega_j \cap X_1 \right| s_j \le R^2 L s_j $. It follows that if $ k $ is the length of a discrete interval of the set $ \hat{ \frak  g } $ and $ m $ is the right end of it then $x_m -  x_{ m+1 } \ge 2^{ k-1 } / \( L R^2 \) $. On the other hand, $ x_m - x_{ m+1 } \le L $ trivially, hence $ k \le C + 2 \log_2 R $, and therefore $ {\hat n}_{ \frak g } = O \(R^d \log R \) $. Thus, $ N ( R ) = n_{ \frak g } + {\hat n}_{ \frak g} = O \(R^d \log R \)  $ as required. To complete the proof, notice that by the very definition of $ s ( R ,x ) $, the summand in (\ref{sqrt}) equals to $ R^{ -1} $, hence (\ref{sqrt}) reduces to $ N ( R ) = O \( R^d \) $ and thus holds trivially. \hfill $ \Box $

\section{Comments on Theorem 2 and applications}


\subsection{The Cantor string}\la{Castr} Let $ \xi \colon [ 0 , 1 ] \to [ 0 , 1 ] $ be the standard Cantor function, $ ( u_j , v_j ) $ be its constancy intervals, $ T ( x ) = x + \xi ( x ) $, $ L = 2 $. Define the Hamiltonian $ \cH $ on $ [ 0 , 2 ] $ to be 
\[ \cH ( x ) = \begin{cases} \frak H_1 , & x \in \bigcup_j T \(  \left[ u_j , v_j \right] \) \cr \frak H_2 & \textrm{otherwise.} \end{cases} . \] 
The canonical system $ ( \cH , 2 ) $ is called the Cantor string. We are going to show that  the assumptions of Theorem \ref{selfsim} hold for $ d = d_C = 2 / \log_2 6 $. Let $ \tau_j $ be the union of $ 2^{ j - 1 } $ intervals thrown away on the $ j $-th step of construction of the Cantor set. Define $ M_R = T \( \bigcup_{ k=1 }^j \tau_k \) $, $ j \sim d \log_2 R $. The set $ M_R $ is a union of $ O \( 2^j \) $ non-intersecting intervals. Consider the covering of $ [ 0 , 2 ] $ by the intervals of the set $ M_R $ and their contiguency intervals, the latter to be denoted $ \omega_j $. By construction, the overall number of intervals in this covering is $ O \( 2^j \) = O \( R^d \) $.

The terms corresponding to the intervals of the set $ M_R $ in the sum in condition (A) obviously vanish, hence the sum reduces to 
\[  \sum \sqrt{ | \omega_j \cap X_1 | \, | \omega_j \cap X_2 | } \le \sqrt{ \sum | \omega_j \cap X_1 | } = \sqrt{ \left| \( (0,2) \setminus M_R \) \cap X_1  \right| } \] 
Since $ T $ is linear on intervals of sets $ \tau_k  $, $ T^{ - 1} X_1 = \bigcup_j \tau_j $, and therefore
\[ \left| \( (0,2) \setminus M_R \) \cap X_1 \right| = \sum_{ k > j } | \tau_k | = \( \frac 23 \)^j  \sim R^{ d ( 1 - \log_2 3  ) } = R^{ 2 ( d-1 ) }  . \]
It follows that the assumption (A) is satisifed. Applying Theorem \ref{selfsim} we conclude that the order of this system is not greater than $ d_C $. Let us show that the order is not less than $ d_C $. Assume that the condition of Theorem 2 is satisfied for some $ d $ and let us use $ \frak D = \frak D ( R ) = \{ \Delta_j \} $ for the covering whose existence is required by the condition. Let $ j = d \log_2 R $ and $ M_R $ be defined as above. Then without loss of generality one can assume that the intervals $ \Delta_j $ are mutually disjoint and the intervals of $ M_R $ are among them. Indeed, adding the intervals of $ M_R $ to $ \frak D $ and removing the intersections of intervals of $ \frak D $ with the intervals of $ M_R $ does not increase the lhs in (A) and increases the constant in the rhs of (B) by at most $ 1 $. Given an $ \omega $, an interval of contiguity of $ M_R $, let $ n_{ \omega , R } $ be the number of intervals of $ \frak D $ belonging to interval $ \omega $, then $ \sum_\omega n_{ \omega , R } \le C R^d = C 2^j $. Since the number of intervals of contiguity is $ 2^j $, it follows that the number of $ k $'s for which $ n_{ k , R } \le 2C $ is greater than $ 2^{ j-1 } $. Let $ \omega $ be a contiguity interval for which $  n_{ k , R } \le 2C $, $ j_0 = j + \log_2 ( 2 C ) + 4 $. By a Dirchlet box argument then there exists an interval $ \Delta \in \frak D $ which contains two nearby intervals of $ T \( \tau_{ j_0} \) $, for the number of intervals of $ T \( \tau_{ j_0 } \) $ contained in $ \omega $ is $ 2^{ j_0 - j } $. By construction the measure $ | \Delta \cap X_1 | \ge 2 \cdot 3^{ - j_0 } = C 3^{ -j } $, and $ | \Delta \cap X_2 | \ge 2^{ - j_0 } = C 2^{ -j } $, so $ \sqrt{ | \Delta \cap X_1 | | \Delta \cap X_2 | } \ge C 2^{ -j/2 } 3^{ -j/2 } $. Since the number of such intervals $ \Delta $ is estimated below by $ C 2^j $ we find that the lhs in (A) is estimated below by $ C \( 2/3 \)^{ j / 2 } = C R^{ d \( 1 - \log_2 3 \)/2 } $. This being $ O \( R^{ d-1 } \) $ means that $ d \ge d_C $. Thus we have derived that the order equals to $ d_C $, the result obtained\footnote{The factor $ 2 $ in the numerator in the expression for $ d_C $ is due to the spectral parameter in our definition of the string being the square root of a "natural" parameter used in \cite{UH}.} in \cite{UH} or \cite{SolVerb} by other means. 



\subsection{Kats formula}\la{Katzf} As mentioned in the introduction a direct comparison of Theorem 2 and Kats formula (\ref{Katzformula}) is not possible for lack of examples using the latter in the situation of the former.  Notice however that, properly understood, (\ref{Katzformula}) holds for a class of problems with $ L = \infty $ (singular strings) and examples are known \cite{Katz2} in this class where the order is calculated by application of (\ref{Katzformula}).

\bigskip

\noindent\textbf{Acknowledgements.} The author is indebted to H. Woracek for attracting his attention to the order problem and useful remarks, to I. Sheipak for references, and to a referee for suggested improvements of the presentation.
This work was supported in part by the Austrian Science Fund (FWF) project I 1536--N25, and the Russian Foundation for Basic Research, Grants 13-01-91002-ANF and 12-01-00215.

\end{document}